\newtheoremstyle{myremark}
  {\topsep}   
  {\topsep}   
  {}  
  {0pt}       
  {\itshape} 
  {.}         
  {5pt plus 1pt minus 1pt} 
  {} 
\newtheorem{thm}{Theorem}[section]
\newtheorem{lem}[thm]{Lemma}
\newtheorem{prop}[thm]{Proposition}
\newtheorem*{thm*}{Theorem}
\newtheorem{cor}[thm]{Corollary}
\theoremstyle{definition}
\newtheorem{defin}[thm]{Definition}
\theoremstyle{myremark}
\newtheorem{rem}[thm]{Remark}
\newtheorem{exa}[thm]{Example}
\newtheorem*{nota}{Notation}
\newcommand{\hooklongrightarrow}{\lhook\joinrel\longrightarrow}
\DeclareMathOperator{\J}{J}
\DeclareMathOperator{\alb}{Alb}
\DeclareMathOperator{\im}{Im}
\DeclareMathOperator{\spec}{Spec}
\DeclareMathOperator{\tr}{tr}
\DeclareMathOperator{\Pic}{Pic}
\DeclareMathOperator{\Div}{Div}
\DeclareMathOperator{\rk}{rk}
\DeclareMathOperator{\divi}{div}
\DeclareMathOperator{\CH}{CH}
\DeclareMathOperator{\ver}{Vert}
\DeclareMathOperator{\NS}{NS}
\DeclareMathOperator{\Ima}{Im}
\DeclareMathOperator{\Num}{Num}
\DeclareMathOperator{\Rat}{Rat}
\DeclareMathOperator{\alg}{alg}
\DeclareMathOperator{\an}{an}
\DeclareMathOperator{\bPic}{\mathbf{Pic}}
\DeclareMathOperator{\Br}{Br}
\newcommand{\sbt}{\,\begin{picture}(-1,1)(-1,-3)\circle*{3}\end{picture}\ }
\title{Numerical equivalence of $\mathbb R$-divisors and Shioda-Tate formula for arithmetic varieties}
\author{
  Paolo Dolce
  \and
  Roberto Gualdi
}
\date{}
\newcommand{\Addresses}{{
  \bigskip
  \footnotesize

  }
  
  P.~Dolce, \textsc{University of Udine}\par\nopagebreak
  \textit{E-mail address}: \texttt{paolo.dolce@uniud.it}
  
  \medskip
  
  R.~Gualdi, \textsc{Fakult\"at f\"ur Mathematik, Universit\"at Regensburg}\par\nopagebreak
  \textit{E-mail address}: \texttt{roberto.gualdi@mathematik.uni-regensburg.de}

}
\begin{document}

\maketitle

\makeatletter
\@starttoc{toc}
\makeatother
\abstract{Let $X$ be an arithmetic variety over the ring of integers of a number field~$K$, with smooth generic fiber~$X_K$.
We give a formula that relates the dimension of the first Arakelov-Chow vector space of $X$ with the Mordell-Weil rank of the Albanese variety of $X_K$ and the rank of the N\'eron-Severi group of~$X_K$.
This is a higher dimensional and arithmetic version of the classical Shioda-Tate formula for elliptic surfaces.

Such analogy is strengthened by the fact that we show that the numerically trivial arithmetic $\mathbb{R}$-divisors on $X$ are exactly the linear combinations of principal ones.
This result is equivalent to the non-degeneracy of the arithmetic intersection pairing in the argument of divisors, partially confirming \cite[Conjecture~1]{GS94}.}

\setcounter{section}{-1}
\section{Introduction}
Given a smooth elliptic surface $Y\to C$ over an algebraically closed field~$k$, admitting a section and with generic fiber $E$ (which is an elliptic curve over~$k(C)$), the \emph{Shioda-Tate formula} compares the rank of the N\'eron-Severi group of $Y$ with the rank of the group of the $k(C)$-rational points of~$E$.
The explicit relation (see \cite[Corollary~1.5]{Shi72} or also \cite[page~429]{Tat95} for the analogous formula over a finite base field) reads
\begin{equation}\label{sh-t}
\rk(\NS(Y))=\rk(E(k(C))+2+\sum_{c\in C} (f_c-1)\,,
\end{equation}
where $f_c$ is the number of irreducible components of the fiber~$Y_c$.
Similar geometric formulas have been recently found for higher dimensional fibrations: see for example \cite{HPW05}, \cite{Ogu09},~\cite{Kah09}.

\begin{rem}\label{rem1}
Note that the rank of the N\'eron-Severi group $\NS(Y)$ coincides with the one of the numerical Picard group~$\Num(Y)$, i.e. the group of divisors of $Y$ modulo numerical equivalence, see for instance \cite[Example~19.3.1]{Ful98}.
As a consequence, one can interpret \eqref{sh-t} as a relation between the set of $k(C)$-rational points of $E$ and the intersection theoretic features of its model $Y$ over~$C$. 
\end{rem}

Thanks to the analogy between function fields and number fields, one expects an arithmetic version of \eqref{sh-t} to hold for models of curves over a number field~$K$, and moreover it is natural to investigate the arithmetic higher dimensional setting.
The study of intersection theory on completed arithmetic surfaces was initiated by S.~J. Arakelov in~\cite{Ara74}, which introduced the notion of divisors in this framework.
Such theory was later deepened by G. Faltings in \cite{Fal83} and \cite{Fal84} with a proof of the Mordell conjecture and of a version of Riemann-Roch theorem for arithmetic surfaces.
In the nineties, H. Gillet and C. Soul\'e in \cite{GS90a} and \cite{GS90b} further extended the theory to arithmetic varieties of any dimension by introducing the notion of arithmetic Chow ring, as we recall in subsection~\ref{subsection_1.1}.

\vspace{\baselineskip}

Let hence $K$ be a number field and $X\to B=\spec O_K$ be a projective arithmetic variety of relative dimension $d$ such that the generic fiber $X_K$ is a smooth, geometrically integral variety over~$K$.
Then, after tensorizing with $\mathbb R$ only the finite part of the arithmetic first Chow group, one can define the notion of arithmetic $\mathbb R$-divisor of $X$ (see~\cite{GS94}) and declare that an arithmetic $\mathbb R$-divisor is \emph{numerically trivial} if its intersection pairing with any arithmetic $\mathbb R$-cycle of complementary dimension is~$0$.
In Theorem~$\ref{hi_bo}$ we show that

\begin{thm*}\label{intro_hi_bo}
An arithmetic $\mathbb R$-divisor is numerically trivial if and only if it is a linear combination of principal ones.
\end{thm*}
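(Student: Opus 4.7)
The easy direction is immediate: every principal arithmetic divisor $\widehat{\divi}(f) = (\divi(f), -\log|f|^2)$ represents the zero class in $\widehat{\CH}^1(X)_{\mathbb R}$ by the definition of rational equivalence in the arithmetic Chow theory, so any $\mathbb R$-linear combination pairs trivially with every arithmetic $\mathbb R$-cycle of complementary dimension.

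For the converse, let $\widehat D = (D, g)$ be numerically trivial. The plan is to test $\widehat D$ against three distinguished families of arithmetic cycles and thereby isolate constraints on its generic-fiber, vertical, and archimedean components. Pairings against cycles of the form $(0, \eta)$, with $\eta$ a smooth closed form at infinity, force the curvature current of $g$ to vanish. Pairings against Arakelov lifts of curves in the generic fiber $X_K$ show that $D_K := D|_{X_K}$ is numerically trivial on $X_K$, so $[D_K]$ belongs to $\Pic^0(X_K)(K)_{\mathbb R}$, a finite-dimensional $\mathbb R$-vector space by the Mordell-Weil theorem applied to $\alb(X_K)$. Pairings against 1-cycles supported in a single special fiber $X_{\mathfrak p}$ show that the restriction $D^{\mathrm{vert}}_{\mathfrak p}|_{X_{\mathfrak p}}$ is numerically trivial on $X_{\mathfrak p}$ as a divisor on that projective $\kappa(\mathfrak p)$-variety.

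To conclude I would promote these partial conclusions to full $\mathbb R$-principality. For the generic-fiber part, the key input is a higher-dimensional Faltings-Hriljac-type identification that realises the N\'eron-Tate height pairing on $\Pic^0(X_K)(K) \times \alb(X_K)(K)$ as an arithmetic intersection pairing on $X$. The numerical triviality of $\widehat D$, combined with the non-degeneracy of the N\'eron-Tate pairing, then forces $[D_K] = 0$: thus $D_K = \sum_i a_i \divi(f_i|_{X_K})$ for some $f_i \in K(X)^\times$, and subtracting $\sum_i a_i \widehat{\divi}(f_i)$ reduces to the case $D_K = 0$. For each $\mathfrak p$, finiteness of $\Pic^0(X_{\mathfrak p})(\kappa(\mathfrak p))$ makes $D^{\mathrm{vert}}_{\mathfrak p}|_{X_{\mathfrak p}}$ $\mathbb R$-principal on $X_{\mathfrak p}$; lifting the relevant rational functions to $K(X)^\times$ and absorbing residual multiples of the full fiber via $X_{\mathfrak p} = \divi(\pi_{\mathfrak p})$ (for a uniformiser $\pi_{\mathfrak p}$) expresses $D^{\mathrm{vert}}$ as an $\mathbb R$-linear combination of principal divisors on $X$. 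Finally, the residual $(0, g)$ has vanishing curvature, hence reduces to locally constant data at each archimedean place; pairings with arithmetic $0$-cycles together with Dirichlet's unit theorem identify these constants with $-\log|u|^2$ for an $\mathbb R$-combination of global units $u \in \mathcal O_K^\times$, and such contributions are themselves principal via $\widehat{\divi}(u) = (0, -\log|u|^2)$.

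The main obstacle is the generic-fiber step: establishing a suitable higher-dimensional Faltings-Hriljac formula and invoking non-degeneracy of the N\'eron-Tate pairing in this arithmetic setting. The vertical reduction is a variant of the classical Shioda-Tate analysis over finite residue fields, and the archimedean wrap-up is a standard application of the $\partial\bar\partial$-lemma and Dirichlet's theorem.
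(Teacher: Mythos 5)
Your strategy correctly identifies the natural reduction into curvature, generic-fiber, vertical, and archimedean pieces, and your treatment of the easy direction, the curvature-vanishing step, and the archimedean wrap-up via Dirichlet's unit theorem all match what the paper does. But there are two genuine gaps in the converse direction.

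First, the paper does not use a higher-dimensional Faltings--Hriljac formula or the non-degeneracy of the N\'eron--Tate pairing. It relies instead on Moriwaki's arithmetic Hodge index theorems in two forms: the one for genuine arithmetic ($\mathbb Z$-)divisors, whose equality case identifies the divisor as a pullback from the base $B$, and the weaker one for arithmetic $\mathbb R$-divisors, whose equality case yields only $D_K=0$ in $\CH^1_{\mathbb R}(X_K)$. You flag the Faltings--Hriljac step as ``the main obstacle,'' and indeed it is: no higher-dimensional version of that formula is available in the literature to cite, and establishing one is at least as delicate as the Hodge index theorem itself. As written, the claim $[D_K]=0$ does not close.

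Second, and more structurally: even granting $D_K=0$ in $\CH^1_{\mathbb R}(X_K)$, this is not the same as producing honest principal arithmetic divisors on $X$. The paper manages this by writing $D=\sum_i\lambda_iD_i$ with the $\lambda_i$ linearly independent over $\mathbb Q$ and each $D_i$ an integral divisor on $X$; then $D_K=0$ forces each $D_{i,K}$ to be torsion in $\CH^1(X_K)$, and applying the \emph{integral} Hodge index theorem to each $n_iD_i$, together with the finiteness of $\CH^1(B)$, shows each $D_i$ is torsion in $\CH^1(X)$, hence a rational multiple of a principal divisor. This $\mathbb Q$-linear-independence decomposition is essential precisely because the $\mathbb R$-version of the Hodge index theorem cannot by itself characterise the equality case. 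Your sketch proposes instead to make each vertical piece principal on the special fiber $X_{\mathfrak p}$ and then ``lift the relevant rational functions to $K(X)^\times$,'' but such lifts do not exist in general; and absorbing multiples of $X_{\mathfrak p}$ via $\divi(\pi_{\mathfrak p})$ tacitly assumes $\mathfrak p$ is principal, which fails unless the class group is trivial. The paper's route through the Zariski-type lemma for vertical divisors plus the integral Hodge index theorem is what actually closes the vertical step.
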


Therefore we bring to light an important difference with algebraic geometry where, on the  contrary, it is customary to find numerically trivial divisors which are not principal.
In our proof we make use of the arithmetic higher dimensional Hodge index theorems for arithmetic divisors and~$\mathbb R$-divisors, which are proved by A. Moriwaki respectively in \cite{Mor96} and \cite{Mor13} (see also \cite{YZ17}) and of the Dirichlet's unit theorem.

The arithmetic intersection pairing for arithmetic cycles of codimension~$q$, in symbols
\[
\left<\;,\,\right>\colon\widehat{\CH}_{\mathbb{R}}^q(X)\times\widehat{\CH}_{\mathbb{R}}^{d+1-q}(X)\longrightarrow\mathbb{R}\,,
\]
was conjectured to be non-degenerate by H. Gillet and C. Soul\'e in \cite[Conjecture~1]{GS94}.
Notice that our Theorem~\ref{hi_bo} confirms this conjecture in the first argument when~$q=1$.

The result of Theorem~\ref{hi_bo} moreover asserts that the knowledge of the intersection-theoretic behaviour of an arithmetic $\mathbb{R}$-divisor is enough to reconstruct its class, and suggests that an analogous of \eqref{sh-t} for arithmetic varieties should involve the dimension of the first Arakelov-Chow space.
Indeed, G. Faltings showed in \cite[Theorem~4.(d)]{Fal84} that when $X$ is an arithmetic surface:
\begin{equation}\label{fal-sh-t}
\dim(\overline{\CH}^1_{\mathbb R}(X))=\rk(\J(X_K)(K))+2+\sum_{\mathfrak p\in B} (f_{\mathfrak p}-1)\,,
\end{equation}
where $\overline{\CH}^1_{\mathbb R}(X)$ is the vector space of Arakelov $\mathbb R$-divisors modulo rational equivalence, $f_{\mathfrak p}$ is the number of irreducible components of the fiber $X_{\mathfrak p}$, and $\J(X_K)$ is the Jacobian variety of the curve~$X_K$.
The key result used for the proof of \eqref{fal-sh-t} is the arithmetic Hodge index theorem for arithmetic surfaces, see \cite{Fal84},~\cite{Hri85}.
Keeping Remark~\ref{rem1} and Theorem~\ref{hi_bo} in mind, one can immediately notice the striking similarities between \eqref{sh-t} and~\eqref{fal-sh-t}.
Driven by this comparison, we prove in Theorem~\ref{hi_ST} the following higher dimensional arithmetic version of the Shioda-Tate formula, without making any assumption on the existence of a $K$-rational point of the generic fiber.

\begin{thm*}
For an arithmetic variety $X$ (as above) of arbitrary dimension:
\begin{equation}\label{our-sh-t}
\dim(\overline{\CH}^1_{\mathbb R}(X))=\rk(\alb(X_K)(K))+\rk(\NS(X_K))+1+\sum_{\mathfrak p\in B} (f_{\mathfrak p}-1)\,.
\end{equation}
\end{thm*}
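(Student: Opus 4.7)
The strategy is to use the restriction to the generic fibre,
\[
r\colon \overline{\CH}^1_{\R}(X) \longrightarrow \Pic(X_K)_{\R}, \qquad [(D,g_D)]\longmapsto [D|_{X_K}],
\]
and compute the dimension of the source as image plus kernel. The map $r$ is surjective, since any divisor on $X_K$ extends to $X$ via Zariski closure (equipped with an arbitrary Green current). For the image, the short exact sequence
\[
0 \longrightarrow \Pic^0(X_K)_{\R} \longrightarrow \Pic(X_K)_{\R} \longrightarrow \NS(X_K)_{\R} \longrightarrow 0,
\]
together with the fact that $\Pic^0(X_K)$ is the dual abelian variety of $\alb(X_K)$ and that dual abelian varieties (being isogenous via any polarisation) share the Mordell--Weil rank, gives $\dim \Pic(X_K)_{\R} = \rk \alb(X_K)(K) + \rk \NS(X_K)$.

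The substance of the proof is then to show that $\dim \ker r = 1 + \sum_{\mathfrak p} (f_{\mathfrak p} - 1)$. A first reduction shows that every class in $\ker r$ admits a representative $(D,g)$ with $D$ vertical: if $D|_{X_K} = \sum_i r_i \divi_{X_K}(f_i)$ in $\Div(X_K)_{\R}$, one subtracts the principal arithmetic $\R$-divisor $\sum_i r_i \widehat{\divi}(f_i)$, with the $f_i$ viewed as elements of $K(X)^*$. Since $X_K$ is smooth and geometrically integral, its global regular functions are the scalars $K^*$, so the kernel of $\divi\colon K(X)^* \otimes\R \to \Div(X_K)_{\R}$ is precisely $K^* \otimes\R$; consequently two vertical representatives are equivalent in $\overline{\CH}^1_{\R}(X)$ if and only if they differ by an element of $\widehat{\divi}(K^* \otimes\R)$. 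It is at this point that Theorem~\ref{hi_bo} enters essentially, by certifying that one really quotients by every $\R$-linear combination of principal divisors, not only by integer ones.

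The final count is linear-algebraic. The vertical $\R$-divisors split as the span of the fibre classes $\bigoplus_{\mathfrak p}\R\cdot[X_{\mathfrak p}]$ and a complement $\bigoplus_{\mathfrak p}\R^{f_{\mathfrak p}-1}$ supported on the finitely many primes where $f_{\mathfrak p}>1$; the archimedean Green data of a vertical divisor, taken modulo the usual $\partial$- and $\bar\partial$-exact equivalences, reduces to $\R^N$, where $N$ is the number of archimedean places of~$K$. The image of $K^* \otimes\R$ under $\alpha \mapsto (\divi_X(\alpha), -\log|\alpha|^2)$ lies entirely inside $\bigoplus_{\mathfrak p}\R\cdot[X_{\mathfrak p}] \oplus \R^N$, and by Dirichlet's unit theorem, combined with the finiteness of the class group of~$K$ and the product formula, this image has codimension exactly~$1$ there. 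Hence $\ker r \cong \bigoplus_{\mathfrak p} \R^{f_{\mathfrak p} - 1} \oplus \R$, of dimension $1 + \sum_{\mathfrak p}(f_{\mathfrak p} - 1)$, and summing with the image contribution yields~\eqref{our-sh-t}. The main obstacle I anticipate lies precisely in this last codimension-$1$ count: the ``$+1$'' is an arithmetic artefact encoding the single Dirichlet relation among archimedean logarithms of global units, and isolating it requires care in handling the infinite-dimensional source $K^*\otimes\R$ and its simultaneous interaction with the vertical and archimedean parts.
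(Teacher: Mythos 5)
Your proposal is correct, and it in fact takes a genuinely different route from the paper in the computation of the kernel. The paper restricts to $\Pic^0_{\mathbb{R}}(X_K)$ after first subtracting a fixed family of divisors $W_1,\ldots,W_r$ spanning $\NS(X_K)\otimes\mathbb{R}$; the kernel of that map is the subspace $\mathcal{S}$ generated by the $W_i$ together with the vertical Arakelov $\mathbb{R}$-divisors, and Lemma~\ref{lemma-dim_of_S} computes $\dim\mathcal{S}$ by exhibiting an explicit basis. For the linear-independence half of that computation the paper invokes Proposition~\ref{hi_li}, a Zariski-lemma style negativity statement for the vertical intersection pairing, which in turn rests on arithmetic ampleness and Nakai--Moishezon. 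You instead restrict directly to $\Pic(X_K)_{\mathbb{R}}$, absorbing the N\'eron--Severi contribution into the target, so the kernel becomes simply the image of $\overline{\ver}_{\mathbb{R}}(X)$ in $\overline{\CH}^1_{\mathbb{R}}(X)$. In explicit form (Proposition~\ref{classical_ar}) this is $\bigoplus_{\mathfrak p}Z^1_{\mathbb{R}}(X)_{\mathfrak p}\oplus\mathbb{R}^N$, and the subspace to quotient out is $\widehat{\divi}(K^{\times}\otimes\mathbb{R})$, which by Remark~\ref{rem_div_const_expl} is confined to the ``whole-fiber'' subspace $\bigoplus_{\mathfrak p}\mathbb{R}X_{\mathfrak p}\oplus\mathbb{R}^N$. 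There it coincides with the kernel of the degree functional --- the class group is finite, Dirichlet's unit theorem provides the archimedean part, and the product formula gives the single relation --- contributing codimension $1$, while $\bigoplus_{\mathfrak p}\bigl(Z^1_{\mathbb{R}}(X)_{\mathfrak p}/\mathbb{R}X_{\mathfrak p}\bigr)$ contributes $\sum_{\mathfrak p}(f_{\mathfrak p}-1)$. This cleanly bypasses Proposition~\ref{hi_li}: linear independence of the non-fiber components in $\overline{\CH}^1_{\mathbb{R}}(X)$ falls out of the observation that principal $\mathbb{R}$-divisors only ever meet whole fibers, without any appeal to arithmetic intersection positivity/negativity. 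That is an appreciable simplification relative to the printed argument.

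Two points deserve correction or more care. First, your claim that Theorem~\ref{hi_bo} ``enters essentially'' to certify that one quotients by all $\mathbb{R}$-linear combinations of principal divisors is misplaced: by Definition~\ref{def_arith_CH}, $\widehat{\Rat}^1_{\mathbb{R}}(X)$ is \emph{by construction} the real span of the $\widehat{\divi}(f)$ together with the $\partial,\bar\partial$-exact contributions, so $\overline{\CH}^1_{\mathbb{R}}(X)$ already kills every $\mathbb{R}$-linear combination of principal divisors. Theorem~\ref{hi_bo} characterizes numerically trivial classes but plays no role here, and indeed the paper's own proof of the Shioda--Tate formula does not cite it either. Second, ``equipped with an arbitrary Green current'' needs a word of caution: to land in $\overline{Z}^1_{\mathbb{R}}(X)$ the curvature must be harmonic, which is always achievable by the Hodge decomposition but is not automatic for an arbitrary Green current; this is implicit in Proposition~\ref{classical_ar} and should be invoked when you claim surjectivity of $r$.
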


The ingredients appearing in the previous formula deserve a quick explanation:
\begin{itemize}
    \item[\sbt] $\overline{\CH}^1_{\mathbb R}(X)$ is the vector space of Arakelov $\mathbb R$-divisors on~$X$ modulo rational equivalence. We recall that an Arakelov $\mathbb R$-divisor is an arithmetic $\mathbb{R}$-divisors with harmonic curvature, see \cite[section~5.1]{GS90a}.
    \item[\sbt] $\alb(X_K)$ is the Albanese variety of~$X_K$, and it is well known by the Mordell-Weil theorem (see for example~\cite{LN59}) that $\alb(X_K)(K)$ is a group of finite rank.
    \item[\sbt] $\NS(X_K)$ is the N\'eron-Severi group of $X_K$.
    \item[\sbt] $f_{\mathfrak p}$ is the number of irreducible components of the fiber~$X_{\mathfrak p}$, as above.
    Remember that $f_{\mathfrak p}=1$ for all but finitely many primes~$\mathfrak p$. 
\end{itemize}

Notice that, when $X_K$ is a curve, \eqref{our-sh-t} coincides with~\eqref{fal-sh-t}, since the group~$\NS(X_K)$ has rank~$1$.

\paragraph{Acknowledgements.}  The research activities of the first author, in relation to this work, were mostly supported by the \emph{Italian national grant ``Ing. Giorgio Schirillo"} conferred by \emph{INdAM}.  The first author wants to thank \emph{M. Tamiozzo} for several interesting discussions on the subject originated from his talk of June 2017 in Nottingham, regarding the rank of the Picard group of arithmetic elliptic surfaces. He also espresses his gratitude to \emph{P. Corvaja}, \emph{I. Fesenko} and \emph{F. Zucconi}.

The second author has benefited from the generous funding of the \emph{Alexander von Humboldt Foundation} while preparing this work, as well as from the \emph{SFB 1085: Higher Invariants} and from the MINECO research project \emph{MTM2015-65361-P}, and he thanks them heartily for their support.
He also wishes to thank \emph{C. Mart{\'i}nez} and \emph{M. Stoffel} for some useful conversations.

The authors are also indebted to \emph{W. Gubler} for comments on a first draft of the paper and to \emph{K. K\"unnemann} for pointing out the connection to the arithmetic standard conjectures.
Finally, they wish to express their thankful gratitude to the anonymous referee, whose careful reading has allowed a notable improvement of the text.

\section{Arithmetic intersection theory}

Let $K$ be a number field with ring of integers~$O_K$, and let $\Sigma$ denote the set of embeddings~$K\hookrightarrow\mathbb{C}$.
We fix for the entire paper a \emph{projective arithmetic variety} $X$ over~$O_K$, that is a regular, flat, integral projective scheme of finite type over~$B:=\spec O_K$ with smooth and geometrically integral generic fiber~$X_K$.
It has geometrically connected fibers because of~\cite[Lemma~37.48.6]{Sta-Pr}.
We denote by $\pi\colon X\to B$ its structure map, and by $d$ its relative dimension.
For any embedding~$\sigma$, we write $X_\sigma$ for the $\mathbb{C}$-scheme~$X\times_\sigma\spec\mathbb{C}$.
The complex analytic spaces associated to $X_\sigma$ and $X_{\overline{\sigma}}$ are related by a natural morphism $F_{\sigma}$ induced by complex conjugation.
As general notation we always attach the subscript $K$ to a cycle on $X$ in order to denote its restriction to the generic fiber~$X_K$.

This section is aimed at reviewing the definition and study of the arithmetic intersection theory on~$X$, following the classical reference~\cite{GS90a}, but working instead with $\mathbb{R}$-cycles as done in~\cite{GS94}.
This will offer us the occasion to fix the notations used throughout all the paper and recall the results which will be brought into play during the proof of our main contributions.

\subsection{The intersection pairing for arithmetic \texorpdfstring{$\mathbb{R}$}{R}-cycles}\label{subsection_1.1}

Let~$q\in\{0,\ldots,d+1\}$.
If $Y=\sum a_iY_i$ is a formal real linear combination of integral subschemes of~$X$ of codimension~$q$, we write $Y_\sigma$ for the corresponding $\mathbb{R}$-cycle of~$X_\sigma$, and $\delta_{Y_\sigma}$ for the integration current on the associated analytic $\mathbb{R}$-cycle.
A \emph{Green current} for $Y_\sigma$ is a  current $g_\sigma$ on $X_\sigma^{\an}$ of bidegree~$(q-1,q-1)$, of real type (i.e. satisfying~$F^{\ast}_\sigma(g_{\sigma})= (-1)^{q-1}g_{\bar\sigma}$) and such that \[
dd^cg_\sigma+\delta_{Y_\sigma}=[\omega(Y,g_\sigma)]
\]
for  a $\mathscr{C}^\infty$-form  $\omega(Y,g_\sigma)$ of bidegree~$(q,q)$  (that is uniquely determined).
Here and for the whole paper, $[\omega]$ stands for the current associated to a smooth form~$\omega$.

\begin{defin}\label{defn arithmetic cycles}
An \emph{arithmetic $\mathbb{R}$-cycle of codimension $q$ on $X$} is a pair~$(Y,(g_\sigma)_{\sigma\in\Sigma})$, where $Y$ is a real linear combination of $q$-codimensional integral subschemes of~$X$, and $g_\sigma$ is a Green current for $Y_\sigma$ for all~$\sigma\in\Sigma$.
The set of arithmetic $\mathbb{R}$-cycles of codimension $q$ on $X$ is equipped with a natural structure of $\mathbb{R}$-vector space, and it is denoted by~$\widehat{Z}^q_\mathbb{R}(X)$.
An element of $\widehat{Z}^1_\mathbb{R}(X)$ is simply called an \emph{arithmetic $\mathbb{R}$-divisor}.
\end{defin}

An arithmetic $\mathbb{R}$-cycle $(Y,(g_\sigma)_\sigma)$ is said to be \emph{vertical} if its geometric part $Y$ is so, that is if the support of $Y$ is not surjectively mapped to $B$ by~$\pi$ or, equivalently, if the cycle $Y_K$ it induces on the generic fiber is the zero cycle.

For all $\mathfrak{p}\in B$ with~$\mathfrak{p}\neq(0)$, we denote by $X_{\mathfrak{p}}$ the cycle on $X$ associated to the fiber of $\pi$ over~$\mathfrak{p}$.
By~\cite[Lemma~1.7.1]{Ful98}, it coincides with the pull-back cycle~$\pi^\ast\mathfrak{p}$.
The subspace of $\mathbb R$-cycles of codimension $q$ on $X$ whose support is contained in $X_\mathfrak{p}$ is denoted by~$Z^q_\mathbb{R}(X)_{\mathfrak p}$.
In particular, $Z^1_\mathbb{R}(X)_{\mathfrak p}$ is the finite dimensional subspace generated by the irreducible components of the support of~$X_{\mathfrak p}$.

\begin{rem}\label{canon_image_vert_arithm}
There is a canonical linear map from the space of vertical $\mathbb{R}$-cycles to the space of arithmetic $\mathbb{R}$-cycles of~$X$, given by~$Y\mapsto(Y,(0)_\sigma)$.
In the following, we will often identify a vertical $\mathbb{R}$-cycle $Y$ of $X$ with its image under this map.
\end{rem}

\begin{rem}
Another natural equivalent possibility for the definition of $\mathbb{R}$-arithmetic cycles of $X$ is to consider Green currents, instead for the set of all embeddings of $K$ in~$\mathbb{C}$, just for their family up to conjugation (that is the set of infinite places of~$K$).
The formulas for the arithmetic intersection product and pairing that we recall in this subsection can be written in this other setting by distinguishing between real and complex embeddings of~$K$, see \cite[Appendix~B]{CD20} for the case of relative dimension~$d=1$.
To avoid confusions with our main reference, we nevertheless stick to the point of view of considering the whole family of complex embeddings of~$K$.

Notice however that, differently from~\cite{GS90a}, we do not define Green currents as classes modulo the sum by the images of $\partial$ and~$\bar{\partial}$.
This divergence turns harmless in our treatment, which is only concerned with arithmetic Chow spaces and arithmetic~$\mathbb{R}$-divisors.
\end{rem}

Analogously to the classical geometry setting, one can introduce a notion of rational equivalence for arithmetic $\mathbb{R}$-cycles on~$X$.
To do this, let $W$ be an integral subscheme of $X$ of codimension~$q-1$, and $f\in K(W)^\times$ a nonzero rational function on it.
By the use of Lelong-Poincar\'{e} formula, it is possible to define a Green current for $\divi(f)_\sigma$ for all~$\sigma\in\Sigma$, see \cite[section~3.3.3]{GS90a} for the precise definition, and hence obtain an arithmetic $\mathbb{R}$-cycle of codimension $q$ on~$X$, which is denoted by~$\widehat{\divi}(f)$.
When~$q=1$, this is achieved by setting, for all nonzero rational function~$f\in K(X)^\times$,
\[
\widehat{\divi}(f):=\left(\divi(f),(-\log|f_\sigma|^2)_\sigma\right)\,,
\]
where $f_\sigma$ is the analytic function induced by $f$ on the analytic space~$X_\sigma^{\an}$.

\begin{rem}\label{rem_div_const_expl}
Since $X$ is a scheme over~$\spec O_K$, an element $\xi\in O_K\setminus\{0\}$ defines a nonzero rational function on~$X$.
Its associated arithmetic $\mathbb{R}$-divisor can be written explicitly as
\[
\widehat{\divi}(\xi)= \left(\sum_{i=1}^me_iX_{\mathfrak{p}_i},\left(-\log|\sigma(\xi)|^2\right)_\sigma\right)\,,
\]
where $(\xi)=\mathfrak{p}_1^{e_1}\cdots\mathfrak{p}_m^{e_m}$ is the factorization of the ideal $(\xi)$ in~$O_K$.
The geometric part in the previous expression can be checked using the equality of cycles in the proof of~\cite[Proposition~2.3.(d)]{Ful98} and the definition of the divisor of $\xi$ on~$\spec O_K$.
In particular, $\widehat{\divi}(\xi)$ is vertical.
Notice that the geometric part of $\widehat{\divi}(\xi)$ vanishes if and only if~$\xi\in O_K^\times$.
More strongly, by Kronecker's theorem one has that $\widehat{\divi}(\xi)$ is the zero arithmetic cycle on $X$ if and only if $\xi$ is a root of unity in~$O_K$.
\end{rem}

With the previous notions in mind, we can introduce the arithmetic version (over~$\mathbb{R}$) of the classical geometrical Chow groups.

\begin{defin}\label{def_arith_CH}
We define $\widehat{\Rat}_{\mathbb R}^q(X)$ to be the real vector subspace of $\widehat{Z}^q_\mathbb{R}(X)$ generated by the arithmetic $\mathbb{R}$-cycles of codimension $q$ of $X$ of the form $\widehat{\divi}(f)$ and by those of the form~$(0,(\partial u_\sigma+\bar\partial v_\sigma)_\sigma)$, where $u_\sigma$ and $v_\sigma$ are currents of suitable bidegree.
The quotient
\[
\widehat{\CH}^q_{\mathbb{R}}(X):=\widehat{Z}^q_\mathbb{R}(X)/\widehat{\Rat}_{\mathbb R}^q(X)
\]
is called the \emph{$q$-th arithmetic Chow space of~$X$}.
\end{defin}

We can get a feeling about the arithmetic Chow spaces of an arithmetic variety by looking at some simple examples.
In the computations, the following classical result in algebraic number theory provides a very useful tool; we refer the reader for instance to~\cite[Corollary~3.4.7]{Mor13} for its proof.

\begin{thm}[Dirichlet's unit theorem]\label{dirichlet_unit_theorem}
Let $K$ be a number field and $\Sigma$ denote the set of embeddings~$K\hookrightarrow\mathbb{C}$.
Assume that $(c_{\sigma})_{\sigma\in\Sigma}$ is a tuple of real numbers satisfying $c_\sigma=c_{\overline{\sigma}}$ and~$\sum_\sigma c_\sigma=0$.
Then, there exist $\xi_1,\ldots,\xi_s\in O_K^\times$ and $a_1,\ldots,a_s\in\mathbb{R}$ such that
\[
c_\sigma=a_1\log|\sigma(\xi_1)|^2+\ldots+a_s\log|\sigma(\xi_s)|^2
\]
for all~$\sigma\in\Sigma$.
\end{thm}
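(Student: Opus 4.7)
My plan would be to deduce this form of the statement from the classical version of Dirichlet's unit theorem, namely the assertion that $O_K^\times/\mu_K$ is a free abelian group of rank $r_1+r_2-1$, where $r_1$ (resp. $r_2$) is the number of real (resp. pairs of complex) embeddings of $K$ and $\mu_K$ is the group of roots of unity of $K$. The statement of the theorem as phrased in the excerpt is essentially the real-vector-space reformulation of this fact, so the main task is to match up the dimensions and check that the logarithmic embedding lands in the right hyperplane.

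First, I would introduce the logarithmic embedding
\[
L\colon O_K^\times\longrightarrow\mathbb{R}^\Sigma\,,\qquad \xi\longmapsto\bigl(\log|\sigma(\xi)|^2\bigr)_{\sigma\in\Sigma}\,,
\]
and define $H\subset\mathbb{R}^\Sigma$ to be the subspace cut out by the constraints $c_\sigma=c_{\overline{\sigma}}$ for every $\sigma\in\Sigma$ together with $\sum_{\sigma}c_\sigma=0$. One immediately checks that $L(O_K^\times)\subseteq H$: the first constraint follows from $|\sigma(\xi)|=|\overline{\sigma}(\xi)|$, while the second is a restatement of $|N_{K/\mathbb{Q}}(\xi)|=\prod_\sigma|\sigma(\xi)|=1$ for any unit $\xi$. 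A dimension count, which distinguishes real embeddings and pairs of conjugate complex embeddings, gives $\dim_\mathbb{R} H = r_1+r_2-1$.

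The heart of the matter is then to show that $L(O_K^\times)$ spans $H$ over $\mathbb{R}$. This is exactly the classical Dirichlet unit theorem: $L(O_K^\times)$ is a discrete subgroup of $H$ whose rank equals $\dim_\mathbb{R} H=r_1+r_2-1$, hence a full lattice in $H$. Once this is granted, any $(c_\sigma)_\sigma\in H$ can be written as $\sum_{i=1}^s a_i\, L(\xi_i)$ for suitable $\xi_i\in O_K^\times$ and $a_i\in\mathbb{R}$, which is exactly the content of the statement.

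The only non-trivial ingredient in the plan is the classical assertion that $L(O_K^\times)$ has rank $r_1+r_2-1$: both the discreteness (which is a consequence of the fact that integral points of bounded archimedean size form a finite set) and the full-rank property (which relies on Minkowski's theorem on lattice points in symmetric convex bodies to produce enough independent units) are standard and I would just cite them, for instance via \cite[Corollary~3.4.7]{Mor13}. No further input is needed, since torsion elements of $O_K^\times$ lie in the kernel of $L$ and are therefore harmless for the stated real-linear decomposition.
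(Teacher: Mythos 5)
Your proof is correct. The paper simply cites \cite[Corollary~3.4.7]{Mor13} for this statement without giving an argument; your proposal spells out the standard reduction (logarithmic embedding, trace-zero hyperplane $H$ of dimension $r_1+r_2-1$, full lattice $\Rightarrow$ real span) from the classical lattice form of Dirichlet's unit theorem, which is exactly the content of the cited reference.
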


\begin{exa}\label{CH_0}
For any choice of~$X$, one has that~$\widehat{\CH}^0_{\mathbb{R}}(X)\simeq\mathbb{R}$.
Indeed, as $X$ is assumed to be irreducible, the only arithmetic $\mathbb{R}$-cycles of codimension $0$ are the pairs of the form $(a X,(0)_\sigma)$ for a real number~$a$, and there are no nontrivial elements in~$\widehat{\Rat}_{\mathbb R}^0(X)$.
\end{exa}

\begin{exa}\label{exa_spec_O_K}
Let~$X=\spec O_K$.
Its generic fiber is~$\spec K$, hence for any choice of $\sigma$ the corresponding analytic space consists of a single point.
Smooth functions on a point are identifiable to the set of real numbers, and hence the only currents of bidegree~$(0,0)$ are the real multiples of~$\delta_{X_\sigma}$, which are the currents associated to real constants.
As a result, an arithmetic $\mathbb{R}$-divisor on $X$ is a pair
\[
\widehat D=\left(\sum_{i=1}^r a_i\mathfrak{p}_i, (c_\sigma)_\sigma\right)\,,
\]
where $D=\sum_ia_i\mathfrak{p}_i$ is a real linear combination of nonzero prime ideals of~$O_K$, and $c_\sigma$ is a real number for all~$\sigma$ with the property that~$c_\sigma=c_{\overline{\sigma}}$.
Analogously to~\cite{Ara74}, it has an \emph{Arakelov degree} defined as
\[
\widehat{\deg}\left(\widehat{D}\right)
:=
\sum_{i=1}^ra_i\log\mathfrak N(\mathfrak p_i)+\frac{1}{2}\sum_{\sigma\in\Sigma}c_\sigma\,,
\]
with $\mathfrak{N}(\mathfrak{p}_i)$ being the cardinality of the field~$O_K/\mathfrak{p}_i$.
Since the Picard group of $\spec O_K$ is finite, for all~$i=1,\ldots,r$ there exist $m_i\in\mathbb{N}_{\geq1}$ and $\xi_i\in K^\times$ such that~$\mathfrak{p}_i^{m_i}=(\xi_i)$.
Therefore, $\widehat D$ can be expressed in $\widehat{\CH}^1_\mathbb{R}(X)$ as
\[
\widehat{D}-\sum_{i=1}^r\frac{a_i}{m_i}\widehat{\divi}(\xi_i)=\left(0,(c_\sigma^\prime)_\sigma\right)
\]
for certain real constants $c_\sigma^\prime$ satisfying~$c^\prime_\sigma=c^\prime_{\overline{\sigma}}$.
Therefore, it follows from Dirichlet's unit theorem and from Remark~\ref{rem_div_const_expl} that the map~$\widehat{\deg}\colon\widehat{\CH}^1_\mathbb{R}(\spec O_K)\to\mathbb{R}$, which is well-defined because of the product formula, is an isomorphism of real vector spaces (see also~\cite[Proposition~3.4.5]{Mor13}).
\end{exa}

\begin{rem}
In the classical theory developed in~\cite{GS90a}, arithmetic cycles are defined by only allowing, in the geometric part, $\mathbb{Z}$-linear combinations of $q$-codimensional integral subschemes of~$X$.
In a similar fashion as explained above, this choice induces $q$-th arithmetic Chow groups~$\widehat{\CH}^q(X)$, see \cite[Definition~3.3.4]{GS90a}.
The natural map
\[
\widehat{\CH}^q(X)\longrightarrow\widehat{\CH}_\mathbb{R}^q(X)
\]
is well defined and its image spans~$\widehat{\CH}_\mathbb{R}^q(X)$. 
We underline however that the real vector spaces $\widehat{\CH}^q(X)\otimes_{\mathbb{Z}}\mathbb{R}$ and $\widehat{\CH}^q_{\mathbb{R}}(X)$ are in general not isomorphic, as~$\mathbb{R}\otimes_{\mathbb{Z}}\mathbb{R}\not\simeq\mathbb{R}$.
For instance, compare Example~\ref{exa_spec_O_K} with the fact that the real vector space $\widehat{\CH}^1(\spec\mathbb{Z})\otimes_{\mathbb{Z}}\mathbb{R}$ is infinite dimensional by~\cite[section~3.4.3]{GS90a}.
\end{rem}

We now briefly recall the construction of the arithmetic intersection product between arithmetic $\mathbb{R}$-cycles.
Let $Y_{\sigma}$ be an integral subscheme of $X_\sigma$ of codimension~$q$.
A \emph{Green form of log type for $Y_\sigma$} is a smooth differential form $G_{Y_\sigma}$ of bidegree $(q-1,q-1)$ on the analytic open set $X^{\an}_\sigma\setminus Y^{\an}_\sigma$ such that:
\begin{itemize}
    \item[\sbt] $G_{Y_\sigma}$ is of log type along~$Y^{\an}_\sigma$ (see \cite[1.3.2]{GS90a} for the precise definition of ``log type").
    \item[\sbt]  $[G_{Y_\sigma}]$ is a Green current for~$Y_\sigma$.
\end{itemize}
For a not necessarily prime $\mathbb{R}$-cycle~$Y_\sigma=\sum_i a_i Y_{i,\sigma}$ of pure dimension~$q$, a Green form of log type for $Y_\sigma$ can be defined as
\[
G_{Y_\sigma}:=\sum_i a_i G_{Y_{i,\sigma}}\,,
\]
where each $G_{Y_{i,\sigma}}$ is a Green form of log type for~$Y_{i,\sigma}$.
In \cite[1.3.5, Theorem]{GS90a} it is shown that for a given~$Y$, a Green form of log type for $Y_{\sigma}$ always exists.

Now we are going to define a ``wedge product'' between  two particular currents associated to cycles.
Consider an $\mathbb{R}$-cycle $Y_\sigma$ of pure codimension $q$ on~$X_\sigma$, and let $Z_\sigma\subseteq X_\sigma$ be an integral subscheme of codimension $p$ such that~$Z_\sigma$ is not contained in the support of~$Y_\sigma$.
Moreover consider~$\psi_\sigma:=\iota_\sigma\circ f_\sigma$, where $f_\sigma$ is a desingularization of $Z_\sigma$ and $\iota_\sigma\colon Z_\sigma\hookrightarrow X_\sigma$ is the closed embedding.
If $G_{Y_\sigma}$ is a Green form of log type for~$Y_\sigma$, we put:
\[
[G_{Y_\sigma}]\wedge \delta_{Z_\sigma}:=(\psi_{\sigma})_\ast[\psi_\sigma^\ast G_{Y_\sigma}]\,,
\]
which is a current of bidegree~$(p+q-1,p+q-1)$.
This extends by linearity on $Z_\sigma$ to the case of $\mathbb{R}$-cycles whose irreducible components are not contained in the support of~$Y_\sigma$.

At this point we are ready to give the definition of the \emph{$\ast$-product} between Green currents.
Let $Y$ and $Z$ be two $\mathbb{R}$-cycles of $X$ intersecting properly on~$X_K$.
For~$\sigma\in\Sigma$, consider Green currents $g_{Y,\sigma}$ and $g_{Z,\sigma}$ for $Y_\sigma$ and $Z_\sigma$ respectively.
Then we put:
\begin{equation}\label{star_prod}
g_{Y,\sigma}\ast g_{Z,\sigma}:=[G_{Y_\sigma}]\wedge \delta_{Z_\sigma}+\omega(Y,g_{Y,\sigma})\wedge g_{Z,\sigma}\,,
\end{equation}
where $G_{Y_\sigma}$ is any Green form of log type for $Y_\sigma$ such that~$g_{Y,\sigma}-[G_{Y_\sigma}]\in \im(\partial+\bar{\partial})$ (note that such a $G_{Y_\sigma}$ always exists thanks to~\cite[Theorem~1.3.5, and Lemma~1.2.4]{GS90a}).
One can show that the binary operation defined in \eqref{star_prod} yields a uniquely defined current modulo $\im(\partial+\bar{\partial})$ and moreover that it is associative and commutative modulo~$\im(\partial+\bar{\partial})$.

Given two arithmetic $\mathbb R$-cycles $\widehat Y=(Y,(g_{Y,\sigma})_\sigma)\in \widehat{\CH}_{\mathbb{R}}^q(X)$ and~$\widehat Z=(Z,(g_{Z,\sigma})_\sigma)\in \widehat{\CH}_{\mathbb{R}}^p(X)$, \emph{with $Y$ and $Z$ intersecting properly on~$X$}, the  following intersection product is well defined:
\begin{equation}\label{int_prod}
\widehat Y\cdot\widehat Z:=\left (Y\cdot Z, (g_{Y,\sigma}\ast g_{Z,\sigma})_{\sigma}\right )\in \widehat{\CH}_{\mathbb{R}}^{p+q}(X)\,;
\end{equation}
here, $Y\cdot Z$ is the usual schematic intersection product between $\mathbb{R}$-cycles of~$X$.
More in general, thanks to \cite[Theorems~4.2.3 and~4.5.1]{GS90a} (see also \cite[III.2.2, Theorem~2]{SABK94}) we have a uniquely defined bilinear and symmetric \emph{arithmetic intersection product}:
\[
\widehat{\CH}^p_{\mathbb R}(X)\times\widehat{\CH}^q_{\mathbb R}(X)\longrightarrow\widehat{\CH}^{p+q}_{\mathbb R}(X)
\]
that extends \eqref{int_prod} and makes $\widehat{\CH}_{\mathbb R}^\ast(X)=\bigoplus_{q\ge 0}\widehat{\CH}_{\mathbb R}^q(X)$ a commutative, graded~$\mathbb R$-algebra.
If $\widehat Y$ is an arithmetic $\mathbb{R}$-cycle on~$X$, we naturally have an arithmetic $\mathbb{R}$-cycle $\pi_\ast\widehat Y$ on~$B$, which consists of the proper pushforward of cycles in the geometric part and of currents in the infinite part.
So, for all~$q\in\{0,\ldots,d+1\}$, the composition of the arithmetic intersection product, the operator~$\pi_\ast$, and the Arakelov degree on the base $\widehat{\CH}^1_{\mathbb R}(B)$ defined as in Example~\ref{exa_spec_O_K} give the  bilinear \emph{intersection pairing}
\begin{equation}\label{arithmetic intersection pairing}
\left<\;,\,\right>\colon\widehat{\CH}_{\mathbb{R}}^q(X)\times\widehat{\CH}_{\mathbb{R}}^{d+1-q}(X)\longrightarrow\mathbb{R}\,.
\end{equation}
Using the associativity of the intersection product, for all triples $\widehat{Y},\widehat{Z},\widehat{W}$ of arithmetic $\mathbb{R}$-cycles on $X$ with codimensions summing to~$d+1$, one has
\[
\left<\widehat{Y},\widehat{Z}\cdot \widehat{W}\right>
=
\left<\widehat{Y}\cdot \widehat{Z}, \widehat{W}\right>.
\]

\begin{rem}\label{intersect_vert_fib}
Recall the identification between vertical $\mathbb{R}$-cycles and their associated canonical arithmetic $\mathbb{R}$-cycles of~$X$, as explained in Remark~\ref{canon_image_vert_arithm}.
Let $Z$ be a vertical $\mathbb{R}$-cycle of~$X$, and $\widehat{Y}=(Y,(g_{Y,\sigma})_\sigma)$ an arbitrary arithmetic $\mathbb{R}$-cycle with $Y$ and $Z$ intersecting properly.
It follows from \eqref{int_prod} and the fact that $Z_\sigma=0$ for all $\sigma$ that~$\widehat{Y}\cdot Z=Y\cdot Z$.
\end{rem}

\begin{rem}\label{mor_int}
Let~$\widehat D=(D,(g_\sigma))\in\widehat{\CH}^1_{\mathbb R}(X)$, then without any change in the actual formulas, it is possible to define the intersection pairing $\left<\widehat D,\widehat C\right>$ when $\widehat C=(C, (t_\sigma)_\sigma)$ is just a couple made of a real cycle $C$ of codimension~$d$ and a collection of currents  $(t_\sigma)_\sigma$ of bidegree $(d-1,d-1)$ of real type which are \emph{not} necessarily Green currents  for~$C_\sigma$.
This can be done by using a version of the moving lemma for $\mathbb{R}$-divisors to reduce to the case of proper intersection, then by applying equation \eqref{int_prod} to calculate $\widehat D\cdot\widehat C$ as in~\cite[Definition~5.16]{Mor14}, and finally by taking the pushforward and the Arakelov degree on the base $B$ as explained above.
We observe in particular that, up to moving $D$ so that it intersects $C$ properly, one has
\[
\widehat{\deg}\,\big(\widehat{D}_{|C}\big)
:=
\left<\widehat D, (C,(0)_\sigma)\right>
=\widehat{\deg}\bigg(\pi_*\Big(D\cdot C,([g_\sigma]\wedge\delta_{C,\sigma})_\sigma\Big)\bigg)
\]
is the extension to arithmetic $\mathbb{R}$-divisors of the arithmetic degree of $\widehat{D}$ along $C$ from \cite[section~2.3]{BGS94}.
We refer the reader to \cite[section~5.3]{Mor12} and \cite[sections~0.2 and~2.1]{Mor13} for a definition of $\widehat{\deg}\,\big(\widehat{D}_{|C}\big)$ under more relaxed hypotheses on~$\widehat{D}$, for its extensions to the intersection numbers of higher dimensional cycles and for its comparison with analogous generalizations by S.-W. Zhang in \cite[Lemma~6.5]{Zha95} and by V. Maillot in \cite[chapitre~5]{Mai00}.

Nevertheless, we underline the fact that in the more general assumptions of this remark, the object~$\widehat D\cdot\widehat C$, although well-defined, is not necessarily an element of~$\widehat{\CH}^{d+1}_{\mathbb R}(X)$.
\end{rem}

\subsection{Arithmetic Hodge index theorems}

We now introduce the higher dimensional arithmetic Hodge index theorem for arithmetic divisors and $\mathbb R$-divisors on an arithmetic variety $X$ which satisfies the same properties as at the beginning of the section; their proof is due to A. Moriwaki.
For simplicity we adopt the following notations:
\begin{itemize}
    \item[\sbt] If $Z$ is a $\mathbb{R}$-cycle on~$X$, then $(Z_K)^m$ denotes the $m$-fold intersection product of the restriction of $Z$ on the $K$-algebraic variety~$X_K$.
    \item[\sbt] If $\widehat Z$ is an arithmetic $\mathbb R$-cycle on~$X$, then $\widehat Z^m$ denotes the $m$-fold arithmetic intersection product.
\end{itemize}

Let $\widehat A=(A,(g_\sigma)_\sigma)\in\widehat{\CH}^1(X)$; because of~\cite[Proposition~2.5.(iv)]{GS90b}, it can be expressed as $(\divi(s),(-\log h_\sigma(s,s))_\sigma)$ for a rational section $s$ of a unique (up to isometry) hermitian line bundle $(\mathscr L,(h_\sigma)_\sigma)$ on~$X$.
We say that $\widehat{A}$ is \emph{ample} if $\mathscr L$ is (relatively) ample on~$X$, the form $\omega(A,g_\sigma)$ is positive definite for all $\sigma$ and $H^0(X,n\mathscr L)$ is generated by strictly small sections as a $\mathbb{Z}$-module for $n$ big enough (see \cite[section~5.10]{Mor14} for details).

An arithmetic $\mathbb R$-divisor $\widehat D=(D,(g_\sigma)_\sigma)$ is said to be \emph{nef} if $\widehat{\deg}\,\big(\widehat{D}_{|C}\big)\ge 0$ for any integral curve~$C\subseteq X$ (recall Remark~\ref{mor_int}) and $\omega(D,g_\sigma)$ is positive semi-definite for all~$\sigma$. 
Finally, we say that $\widehat D$ is \emph{integrable} (in the sense of Moriwaki) if it can be expressed as the difference of two nef arithmetic $\mathbb R$-divisors.

\begin{thm}[arithmetic Hodge index theorem]\label{ZHod}
Assume that~$d\geq1$, and fix an ample arithmetic divisor $\widehat  A$ on~$X$.
For any arithmetic divisor $\widehat D$ such that
\[
\deg\left(D_K\cdot(A_K)^{d-1}\right)=0
\]
we have that:
\begin{equation}\label{self_neg}
\left<\widehat D^2,\widehat A^{d-1}\right>\le 0\,.
\end{equation}
The equality in \eqref{self_neg} is achieved if and only if there are $\widehat{E}\in \widehat{\CH}^1(B)$ and $n\in\mathbb N_{\geq1}$ such that~$n\widehat D=\pi^{\ast}(\widehat E)$.
\end{thm}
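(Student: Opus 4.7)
The plan is to establish the inequality and the equality characterization simultaneously by induction on the relative dimension $d$, reducing to an arithmetic surface, where the classical Faltings--Hriljac arithmetic Hodge index theorem handles the base case.

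For $d=1$, the hypothesis becomes $\deg(D_K)=0$ on the curve $X_K$, and the assertion is the classical Faltings--Hriljac theorem. Its proof rests on the identity expressing $\widehat{D}^2$ as the sum of $-2\,\hat{h}_{\mathrm{NT}}([D_K])$, where $\hat{h}_{\mathrm{NT}}$ denotes the N\'eron--Tate height pairing on $\J(X_K)(K)$, and a negative semi-definite fibral correction term. The inequality follows from the positivity of the N\'eron--Tate pairing, and in the equality case both pieces must vanish: $[D_K]$ is forced to be torsion, so $D_K$ is rationally trivial on $X_K$, while the remaining vertical and archimedean data are absorbed into a rational pullback from $B$ via Dirichlet's unit theorem and the description of $\widehat{\CH}^1_\mathbb{R}(B)$ given in Example~\ref{exa_spec_O_K}.

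For the inductive step $d\geq 2$, assume the theorem in relative dimension $d-1$. By the arithmetic ampleness of $\widehat{A}$, for $n\gg 1$ there exists a strictly small global section $s$ of the hermitian line bundle attached to $n\widehat{A}$; by an arithmetic Bertini-type argument one may choose $s$ so that $Y:=\mathrm{div}(s)$ is a regular horizontal prime divisor of $X$ with smooth and geometrically integral generic fibre. Then $Y$ is itself a projective arithmetic variety of relative dimension $d-1$ over $O_K$. Writing $\iota\colon Y\hookrightarrow X$, $\widehat{D}|_Y:=\iota^\ast\widehat{D}$ and $\widehat{A}|_Y:=\iota^\ast\widehat{A}$, the restriction $\widehat{A}|_Y$ is ample, the numerical condition transfers to $\deg((D|_Y)_K\cdot(A|_Y)_K^{d-2})=0$ by the geometric projection formula, and the arithmetic projection formula together with $\widehat{\mathrm{div}}(s)=n\widehat{A}$ in $\widehat{\CH}^1_\mathbb{R}(X)$ yields
\[
\left\langle \widehat{D}^2, \widehat{A}^{d-1}\right\rangle_X = \frac{1}{n}\left\langle (\widehat{D}|_Y)^2, (\widehat{A}|_Y)^{d-2}\right\rangle_Y.
\]
The inductive hypothesis applied on $Y$ delivers the inequality.

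For the equality case, induction produces an integer $m\geq 1$ and a class $\widehat{E}_Y\in\widehat{\CH}^1(B)$ such that $m\,\widehat{D}|_Y$ equals the pullback of $\widehat{E}_Y$ via the structural morphism of $Y$. Letting the section $s$ vary within the family of strictly small sections and exploiting that generic arithmetic hyperplanes of sufficiently large degree cover $X$ in a suitable quantitative sense, one bootstraps this conclusion from a single $Y$ to the global claim: some positive integer multiple of $\widehat{D}$ coincides with $\pi^\ast\widehat{E}$ for an appropriate $\widehat{E}\in\widehat{\CH}^1(B)$. The crux of the whole argument, and the expected main obstacle, is this very last step: one must check that, as $Y$ ranges over a sufficiently rich family of arithmetic ample hyperplanes, the restriction maps $\iota^\ast$ have collectively trivial kernel modulo pullbacks from $B$. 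Managing this requires, besides the arithmetic Bertini ingredient already invoked, a careful control of the compatibility of Green currents under restriction and a density argument in a large enough linear system, all of which are carried out in detail in \cite{Mor96}.
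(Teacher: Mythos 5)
The paper does not prove Theorem~\ref{ZHod}: the listed proof is a bare citation of Moriwaki's \cite[Theorem~B]{Mor96}, so there is no proof in the paper against which to compare your route. Since you are reconstructing Moriwaki's argument blind, I'll assess your sketch on its own merits, and there is a genuine gap in the inductive step.

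The formula you extract from the projection formula,
\[
\left\langle \widehat{D}^2, \widehat{A}^{d-1}\right\rangle_X = \frac{1}{n}\left\langle (\widehat{D}|_Y)^2, (\widehat{A}|_Y)^{d-2}\right\rangle_Y,
\]
is not correct. While it is true that $\widehat{\divi}(s)=n\widehat{A}$ in $\widehat{\CH}^1_\mathbb{R}(X)$, this class is represented by the pair $(Y,(-\log\|s\|^2_\sigma)_\sigma)$, \emph{not} by $(Y,(0)_\sigma)$. Consequently, when one expands $\langle\widehat{D}^2\cdot\widehat{A}^{d-2},\widehat{\divi}(s)\rangle_X$ using the $\ast$-product \eqref{star_prod}, the current $g_{D^2,\sigma}\ast(-\log\|s\|^2_\sigma)$ contains the term $\omega(D,g_\sigma)^2\wedge(-\log\|s\|^2_\sigma)$, and the pushforward to $\widehat{\CH}^1_{\mathbb R}(B)$ therefore produces, besides the restricted pairing on $Y$, an additional archimedean contribution proportional to
\[
\sum_{\sigma\in\Sigma}\int_{X^{\an}_\sigma}\bigl(-\log\|s\|^2_\sigma\bigr)\,c_1(\widehat{D}_\sigma)^2\wedge c_1(\widehat{A}_\sigma)^{d-2}.
\]
This extra term does not vanish and has no a priori sign: the hypothesis $\deg(D_K\cdot(A_K)^{d-1})=0$ controls only the unweighted integral $\int c_1(\widehat{D}_\sigma)\wedge c_1(\widehat{A}_\sigma)^{d-1}$, and the pointwise sign of $c_1(\widehat{D}_\sigma)^2\wedge c_1(\widehat{A}_\sigma)^{d-2}$ is indefinite, so the integral against the strictly positive weight $-\log\|s\|^2_\sigma$ is not dominated by the Kähler Hodge index inequality. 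Thus the inequality on $X$ does not follow from the one on $Y$ by your reduction; the whole induction stalls already at the first step, before the bootstrapping issue you flag in the equality case. Handling this archimedean defect is precisely the technical heart of Moriwaki's argument (and of related work such as Zhang's); it requires renormalising or reshaping the Green currents, or passing to a more carefully chosen linear system, rather than the direct restriction you invoke. Your description of the base case $d=1$ via the Faltings–Hriljac decomposition into a N\'eron--Tate piece and a fibral piece is, by contrast, accurate.
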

\proof
See \cite[Theorem~B]{Mor96}.
\endproof

The extension of Theorem~\ref{ZHod} is not straightforward for arithmetic~$\mathbb R$-divisors, since a negative definite quadratic form $q\colon\mathbb Q^2\to\mathbb{R}$ doesn't necessarily extend to a negative definite quadratic form on~$\mathbb R^2$. 
Nevertheless we have the following weaker result:

\begin{thm}[arithmetic Hodge index theorem for real divisors]\label{RHod}
Assume that~$d\geq1$, and fix an ample arithmetic divisor $\widehat  A$ on~$X$.
For any integrable arithmetic $\mathbb{R}$-divisor $\widehat D$ such that
\[
\deg\left(D_K\cdot(A_K)^{d-1}\right)=0
\]
we have that:
\begin{equation}\label{rself_neg}
\left<\widehat D^2,\widehat A^{d-1}\right>\le 0\,.
\end{equation}
Moreover, if in \eqref{rself_neg} equality holds, then $D_K=0$ in~$\CH^1_{\mathbb R}(X_K)$.
\end{thm}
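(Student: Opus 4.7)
The strategy is to derive Theorem~\ref{RHod} from the integral case (Theorem~\ref{ZHod}) by an approximation argument, which is precisely what the integrability assumption is designed to enable. First I would exploit integrability to write $\widehat{D}=\widehat{P}-\widehat{Q}$ with $\widehat{P},\widehat{Q}$ nef arithmetic $\mathbb{R}$-divisors and approximate each by sequences of nef arithmetic $\mathbb{Q}$-divisors in a topology for which the intersection pairing is continuous, using the framework developed in~\cite{Mor13}. This produces a sequence $\widehat{D}_n$ of $\mathbb{Q}$-arithmetic divisors with $\widehat{D}_n\to\widehat{D}$. The approximants need not satisfy the orthogonality condition, so I would correct them by setting
\[
\widehat{D}_n':=\widehat{D}_n-\frac{\deg(D_{n,K}\cdot A_K^{d-1})}{\deg(A_K^d)}\,\widehat{A}\,;
\]
the scalar factor tends to $0$, so $\widehat{D}_n'\to\widehat{D}$ still holds. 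Clearing denominators and applying Theorem~\ref{ZHod} to each $\widehat{D}_n'$ yields $\langle (\widehat{D}_n')^2,\widehat{A}^{d-1}\rangle\leq 0$, whence~\eqref{rself_neg} follows by continuity of the arithmetic intersection pairing.

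\textbf{Equality case.} Assume $\langle \widehat{D}^2,\widehat{A}^{d-1}\rangle=0$. For any integrable arithmetic $\mathbb{R}$-divisor $\widehat{E}$ with $\deg(E_K\cdot A_K^{d-1})=0$ and every $t\in\mathbb{R}$, the divisor $\widehat{D}+t\widehat{E}$ is still integrable and orthogonal to $A_K^{d-1}$ on the generic fiber; the inequality just established gives
\[
2t\,\langle \widehat{D}\cdot\widehat{E},\widehat{A}^{d-1}\rangle + t^2\,\langle \widehat{E}^2,\widehat{A}^{d-1}\rangle \leq 0 \qquad \text{for all } t\in\mathbb{R},
\]
which forces the linear coefficient to vanish. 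Hence $\widehat{D}$ lies in the radical of the bilinear form $\langle -\cdot-,\widehat{A}^{d-1}\rangle$ restricted to the space of integrable orthogonal $\mathbb{R}$-divisors. To translate this into $D_K=0$ in $\CH^1_{\mathbb{R}}(X_K)$, I would test against integrable arithmetic lifts of divisor classes $F_K\in\CH^1(X_K)$ orthogonal to $A_K^{d-1}$: the radical relation forces the geometric intersection numbers $\deg(D_K\cdot F_K\cdot A_K^{d-2})$ (and similar pairings) to vanish, and the classical Hodge index theorem on the smooth projective $K$-variety $X_K$ then yields $D_K=0$ in $\CH^1_{\mathbb{R}}(X_K)$.

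\textbf{Main obstacle.} The principal technical difficulty is the density-plus-continuity infrastructure underpinning the approximation step: one needs a topology on arithmetic $\mathbb{R}$-divisors in which nef arithmetic $\mathbb{Q}$-divisors are dense in the nef cone \emph{and} the arithmetic intersection pairing is continuous. This framework was developed by Moriwaki in~\cite{Mor13} (see also~\cite{YZ17}), and the integrability hypothesis is exactly what permits $\widehat{D}$ to be handled via this machinery, circumventing the obstruction that a negative-definite rational form need not remain negative definite after extension of scalars to $\mathbb{R}$. A secondary subtlety lies in the last step of the equality case, where the passage from vanishing of arithmetic pairings to vanishing of $D_K$ in $\CH^1_{\mathbb{R}}(X_K)$ requires careful bookkeeping of the Green-current contributions to arithmetic intersection numbers.
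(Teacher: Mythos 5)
The paper does not prove this theorem itself; it cites \cite[Theorem~2.2.5]{Mor13}, so there is no in-paper argument to compare against. Your sketch of the inequality via approximation by nef $\mathbb{Q}$-arithmetic divisors and continuity is in the right spirit (though the required density of nef $\mathbb{Q}$-arithmetic divisors among nef $\mathbb{R}$-arithmetic divisors, in a topology making the pairing continuous, is itself a nontrivial part of Moriwaki's machinery and not a given).

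The equality case contains a genuine gap at the final step. You propose to conclude $D_K=0$ in $\CH^1_{\mathbb{R}}(X_K)$ by applying the classical geometric Hodge index theorem on $X_K$, but that theorem is a statement about the quadratic form on $\NS(X_K)\otimes\mathbb{R}$: the strongest thing it can yield is that $D_K$ is \emph{numerically} trivial, i.e.\ that its image in $\NS(X_K)\otimes\mathbb{R}$ vanishes. The conclusion of Theorem~\ref{RHod} is strictly stronger: $D_K$ must vanish in $\CH^1_{\mathbb{R}}(X_K)\simeq\Pic(X_K)\otimes_{\mathbb{Z}}\mathbb{R}$, and the kernel of the surjection $\Pic(X_K)\otimes\mathbb{R}\to\NS(X_K)\otimes\mathbb{R}$ is $\Pic^0(X_K)\otimes\mathbb{R}$, a real vector space of dimension equal to the Mordell--Weil rank of the Picard variety of $X_K$, which is generally positive. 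No purely geometric argument on the generic fiber can rule out a nonzero algebraically trivial component of~$D_K$; the stronger conclusion is inherently arithmetic (morally, it is the non-degeneracy of the N\'eron--Tate height pairing). The route that actually works is the one used elsewhere in the paper: decompose $\widehat{D}=\sum_i\lambda_i\widehat{D}_i$ with $\lambda_i$ linearly independent over $\mathbb{Q}$ and $\widehat{D}_i$ integral, and invoke the equality case of the \emph{arithmetic} Hodge index theorem for integer divisors (Theorem~\ref{ZHod}), whose conclusion $n\widehat{D}_i=\pi^\ast\widehat{E}_i$ is strong enough to force each $D_{i,K}$ to be torsion and hence $D_K=0$ in $\CH^1_{\mathbb{R}}(X_K)$. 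Your closing remark about ``careful bookkeeping of Green-current contributions'' does not name this obstruction: the problem is not bookkeeping but that the geometric Hodge index theorem simply cannot see $\Pic^0$.
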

\proof
See \cite[Theorem~2.2.5]{Mor13}.
\endproof

\begin{rem}
The statement of \cite[Theorem~2.2.5]{Mor13} holds in a more general setting: in fact the author allows Green currents to be just of~$\mathscr C^0$-type, and $\widehat A$ to be an ample arithmetic~$\mathbb Q$-divisor.
Although the original proof is given for the case~$K=\mathbb Q$, it is not difficult to modify it for a general number field.
\end{rem}

We conclude this subsection by giving a form of the Hodge index theorem for arithmetic $\mathbb R$-divisors which are vertical over a prime of the basis.

To state it, recall that for a prime $\mathfrak{p}\neq(0)$ we denote by $Z^1_{\mathbb R}(X)_{\mathfrak p}$ the space of $\mathbb R$-divisors of $X$ with support contained in the fiber~$X_\mathfrak p$.
As in Remark~\ref{canon_image_vert_arithm}, we identify an $\mathbb{R}$-divisor $D$ of $X$ which is vertical over $\mathfrak{p}$ with its canonical image in~$\widehat{Z}^1_{\mathbb R}(X)$.

\begin{prop}\label{hi_li}
Fix an ample arithmetic divisor $\widehat{A}$ on~$X$, and a prime $\mathfrak p\neq(0)$ in~$B$.
Then, the association
\begin{eqnarray*}
\left<\;,\,\right>_{\mathfrak p,\widehat {A}}\colon Z^1_{\mathbb R}(X)_{\mathfrak p}\times Z^1_{\mathbb R}(X)_{\mathfrak p}&\longrightarrow& \mathbb R\\
(D,E) &\longmapsto& \left<D,E\cdot\widehat {A}^{d-1}\right>
\end{eqnarray*}
is a negative semi-definite symmetric bilinear form, and $\left<D,D\right>_{\mathfrak p,\widehat {A}}=0$ if and only if~$D\in\mathbb RX_{\mathfrak p}$.
\end{prop}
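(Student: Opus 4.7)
The plan is to encode the bilinear form as a real symmetric matrix in the natural basis of $Z^1_{\mathbb R}(X)_{\mathfrak p}$, verify three key properties using arithmetic intersection theory on the fiber, and then invoke a classical Zariski-type linear algebra lemma.

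Let $V_1,\ldots,V_n$ denote the distinct irreducible components of the fiber $X_{\mathfrak p}$, and write $X_{\mathfrak p}=\sum_{i=1}^n m_iV_i$ with multiplicities $m_i\in\mathbb N_{\geq 1}$, so that $Z^1_{\mathbb R}(X)_{\mathfrak p}=\bigoplus_{i=1}^n\mathbb RV_i$. Define the real $n\times n$ matrix $M=(M_{ij})$ by $M_{ij}:=\langle V_i,V_j\rangle_{\mathfrak p,\widehat A}=\langle V_i\cdot V_j,\widehat A^{d-1}\rangle$; symmetry of $M$ is immediate from the commutativity of the arithmetic intersection product and the associativity of the pairing. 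The three properties I plan to verify are: \textbf{(a)} $\sum_j m_jM_{ij}=0$ for every $i$; \textbf{(b)} $M_{ij}\geq 0$ for $i\neq j$; \textbf{(c)} $M_{ij}>0$ precisely when $V_i\cap V_j\neq\emptyset$.

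Property \textbf{(a)} will come from $X_{\mathfrak p}=\pi^{\ast}\mathfrak p$ and the projection formula, which give $\sum_jm_jM_{ij}=\widehat{\deg}\bigl(\mathfrak p\cdot\pi_{\ast}(V_i\cdot\widehat A^{d-1})\bigr)$; by Remark~\ref{intersect_vert_fib} applied to the vertical divisor $V_i$, the cycle $V_i\cdot\widehat A^{d-1}$ is represented by a one-dimensional geometric cycle supported in the fiber $X_{\mathfrak p}$ with vanishing infinite part, and its pushforward to $B$ is therefore zero. For \textbf{(b)} and \textbf{(c)}, the distinct components $V_i,V_j$ of the fiber meet properly on $X$; a double application of Remark~\ref{intersect_vert_fib} then yields
\[
M_{ij}=\log\mathfrak N(\mathfrak p)\cdot\deg_{\kappa(\mathfrak p)}\bigl((A|_{X_{\mathfrak p}})^{d-1}\cdot V_i\cdot V_j\bigr)\,,
\]
the last quantity being a standard geometric intersection number on the projective $\kappa(\mathfrak p)$-scheme $X_{\mathfrak p}$. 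Since the line bundle $\mathcal L_A$ restricts to an ample line bundle on $X_{\mathfrak p}$, its $(d-1)$-th self-intersection paired with the effective $(d-1)$-dimensional cycle $V_i\cdot V_j$ is non-negative, and strictly positive exactly when $V_i\cap V_j\neq\emptyset$.

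The geometric connectedness of the fibers of $\pi$, noted at the beginning of Section~\ref{subsection_1.1}, implies that the graph on $\{V_1,\ldots,V_n\}$ with edges $\{i,j : V_i\cap V_j\neq\emptyset\}$ is connected. The proposition then reduces to the purely linear-algebraic Zariski-type lemma: \emph{any real symmetric matrix $M$ fulfilling \textbf{(a)} with positive weights $(m_i)$, \textbf{(b)}, and having connected graph of strictly positive off-diagonal entries is negative semi-definite, with kernel $\mathbb R\cdot(m_1,\ldots,m_n)$}. The proof of this lemma boils down to eliminating the diagonal entries via \textbf{(a)} to obtain the identity
\[
\sum_{i,j} a_ia_jM_{ij}=-\sum_{1\leq i<j\leq n}\frac{M_{ij}}{m_im_j}(m_ja_i-m_ia_j)^2\,,
\]
which is manifestly non-positive and vanishes precisely when the ratio $a_i/m_i$ is constant across all $i$, i.e., when $D\in\mathbb RX_{\mathfrak p}$.

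The main technical obstacle is in establishing \textbf{(b)}--\textbf{(c)}: one must ensure that the arithmetic pairing truly reduces to a purely geometric intersection computation on the fiber $X_{\mathfrak p}$, where the relative ampleness of $\widehat A$ can be brought to bear. The fact that the star product of two vanishing Green currents associated to vertical cycles is zero, obtained by combining Remark~\ref{intersect_vert_fib} with the formula~\eqref{star_prod}, is the key ingredient making this reduction possible.
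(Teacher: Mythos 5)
Your proof is correct and follows essentially the same strategy as the paper's: reduce the arithmetic pairing to geometric intersection numbers on the fiber $X_{\mathfrak p}$, check the hypotheses of a Zariski-type lemma (vanishing weighted row sums, non-negative off-diagonal entries, connected incidence graph), and conclude; the lemma you state and prove inline is exactly the one the paper cites as~\cite[Lemma~1.1.4]{Mor13}. The one noteworthy variation is in establishing the row-sum condition~\textbf{(a)}: the paper uses finiteness of $\CH^1(B)$ to replace $X_{\mathfrak p}$ in $\widehat{\CH}^1_{\mathbb R}(X)$ by a purely archimedean class $(0,(\log|f_\sigma|^2/m)_\sigma)$ and then invokes Remark~\ref{intersect_vert_fib}, whereas you invoke the arithmetic projection formula for $\pi$ together with the vanishing of $\pi_\ast$ on a fibral curve with zero currents --- both are valid, and your route avoids appealing to the finiteness of the class group.
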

\proof
The function $\left<\;,\,\right>_{\mathfrak p,\widehat {A}}$ is clearly bilinear and symmetric, because of the corresponding properties of the arithmetic intersection product.
Therefore, we can prove the claim by means of the Zariski's Lemma for general finite dimensional real vector spaces, for instance in the form of \cite[Lemma~1.1.4]{Mor13}.
As $\widehat A$ is fixed, we will put for simplicity of notations $\left<\;,\,\right>_{\mathfrak p}:=\left<\;,\,\right>_{\mathfrak p,\widehat {A}}$ in the remaining of the proof, and denote by $A$ the corresponding underlying divisor on~$X$.

The irreducible components $\Gamma_1,\ldots,\Gamma_r$ of the fiber of $X$ over~$\mathfrak{p}$ are generators for the real vector space~$Z_{\mathbb{R}}^1(X)_\mathfrak{p}$, and we have that $X_{\mathfrak p}=\sum_i a_i\Gamma_i$ for some positive integers~$a_1,\ldots,a_r$.

As $\CH^1(B)$ is a finite group, the geometric cycle $X_{\mathfrak{p}}=\pi^\ast\mathfrak{p}$ is a torsion element in~$\CH^1(X)$.
Therefore, there exist $m\in\mathbb{N}_{\geq1}$ and $f\in K(X)^\times$ such that~$m X_{\mathfrak p}=\divi(f)$.
This implies that the equality $(X_\mathfrak{p},(0)_\sigma)=(0,(\log|f_\sigma|^2/m)_\sigma)$ holds in~$\widehat{\CH}_{\mathbb{R}}^1(X)$ and then that
\[
\left<\Gamma_i,X_{\mathfrak p}\right>_{\mathfrak p}
=
\left<\Gamma_i,(0,(\log|f_\sigma|^2/m)_\sigma)\cdot\widehat{A}^{d-1}\right>
=
0
\]
for all~$i=1,\ldots,r$, thanks to Remark~\ref{intersect_vert_fib}.

The same remark ensures that for all $i\neq j$ one has~$\left<\Gamma_i,\Gamma_j\right>_{\mathfrak p}=\log \mathfrak N(\mathfrak{p})\deg\left(\Gamma_i\cdot\Gamma_j\cdot A^{d-1}\right)$.
The product $\Gamma_i\cdot\Gamma_j$ is effective by definition, see for instance \cite[section~2.3]{Ful98}.
As $A$ is ample, the Nakai-Moishezon criterion gives that $\left<\Gamma_i,\Gamma_j\right>_{\mathfrak p}\geq0$ for all choice of~$i\neq j$, and moreover the sign is strictly positive if $\Gamma_i$ and $\Gamma_j$ have nonempty intersection, refer to \cite[Lemma~12.1]{Ful98}.

Finally, recall that the fiber of $X$ over $\mathfrak{p}$ is connected.
Hence, for every $i\neq j$ there exist a sequence of indices $i=i_1,i_2,\ldots,i_\ell=j$ such that $\Gamma_k$ and $\Gamma_{k+1}$ have nonempty intersection for all~$k=1,\ldots,\ell-1$.

Applying readily \cite[Lemma~1.1.4 (2)]{Mor13}, one concludes the proof of the statement. 
\endproof

Notice that related proofs of the previous result in slightly different settings can be found in \cite[Lemma~2.1]{HPW05} and (in the case of surfaces) in \cite[Theorem~9.1.23]{Liu02}.

\section{Characterization of numerically trivial arithmetic \texorpdfstring{$\mathbb{R}$}{R}-divisors}

This section is devoted to prove that, for arithmetic~$\mathbb R$-divisors, numerical equivalence is actually the same as rational equivalence.
This result will be an extension and a generalization of \cite[Theorem~7.1]{BC09}, of which we follow the strategy.
Note that, for algebraic varieties over a field, numerically equivalent divisors are in general not rationally equivalent; therefore we are going to prove an important distinctive feature of arithmetic geometry.

\begin{defin}\label{num-Pic}
An  arithmetic $\mathbb R$-divisor $\widehat{D}$ is said to be \emph{numerically trivial} if 
\[
\left<\widehat{D},\widehat C\right>=0
\]
for all~$\widehat C\in\widehat{\CH}_{\mathbb R}^d(X)$.
The set of all numerically trivial arithmetic $\mathbb R$-divisors is a real vector subspace of~$\widehat{Z}^1_{\mathbb{R}}(X)$, denoted by~$\widehat{\Num}^{\tr}_{\mathbb R}(X)$.
Two arithmetic $\mathbb{R}$-divisors are said to be \emph{numerically equivalent} if their difference is numerically trivial.
\end{defin}

Before considering the general situation, it is illustrative to consider the simpler case of~$d=0$, and characterize numerically trivial arithmetic~$\mathbb{R}$-divisors in this instance.

\begin{exa}\label{num_tri_on_O_K}
Let~$X=\spec O_K$.
Because of Example~\ref{CH_0}, an arithmetic $\mathbb{R}$-divisor $\widehat{D}$ on $X$ is numerically trivial if and only if it has vanishing Arakelov degree.
As in Example~\ref{exa_spec_O_K}, the Dirichlet's unit theorem implies then that $\widehat{D}$ is a real linear combination of principal arithmetic divisors.
\end{exa}

Even in the general setting, numerically trivial arithmetic $\mathbb{R}$-divisors satisfy several restrictive properties.
The first one concerns their curvature form.

\begin{prop}\label{omega_is_zero}
If $\widehat{D}=(D,(g_\sigma)_\sigma)$ is a numerically trivial arithmetic~$\mathbb{R}$-divisor, then $\omega(D,g_\sigma)=0$ for all~$\sigma\in\Sigma$.
\end{prop}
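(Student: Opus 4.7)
The strategy is to test the assumed numerical triviality of $\widehat{D}$ against a particular family of arithmetic $\mathbb{R}$-cycles of codimension $d$ that are concentrated at the archimedean places, namely classes of the form $\widehat{C}=(0,([\eta_\sigma])_\sigma)$ where $\eta_\sigma$ is a smooth $(d-1,d-1)$-form on $X_\sigma^{\an}$ of real type. Each $[\eta_\sigma]$ is a valid Green current for the zero cycle, since $dd^c[\eta_\sigma]=[dd^c\eta_\sigma]$ is the current associated to a smooth form and $\delta_{0_\sigma}=0$. In particular $\widehat{C}$ determines a well-defined class in $\widehat{\CH}^d_{\mathbb{R}}(X)$.

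I would next compute $\langle\widehat{D},\widehat{C}\rangle$ explicitly. The geometric parts $D$ and $0$ trivially intersect properly, so formula \eqref{int_prod} applies; moreover the $\ast$-product of \eqref{star_prod} simplifies, since $\delta_{0_\sigma}=0$, to
\[
g_{D,\sigma}\ast[\eta_\sigma]\;=\;[\omega(D,g_{D,\sigma})\wedge\eta_\sigma]\,.
\]
The geometric part of $\widehat{D}\cdot\widehat{C}$ vanishes, so after applying $\pi_\ast$ and the Arakelov degree on $B$ (as recalled in Example~\ref{exa_spec_O_K}) one obtains
\[
\langle\widehat{D},\widehat{C}\rangle\;=\;\frac{1}{2}\sum_{\sigma\in\Sigma}\int_{X_\sigma^{\an}}\omega(D,g_{D,\sigma})\wedge\eta_\sigma\,.
\]
Numerical triviality of $\widehat{D}$ then forces this sum to vanish for every admissible family $(\eta_\sigma)_\sigma$.

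To conclude, I would fix an arbitrary $\sigma_0\in\Sigma$ and, for any smooth $(d-1,d-1)$-form $\alpha$ on $X_{\sigma_0}^{\an}$, construct a real-type family $(\eta_\sigma)_\sigma$ supported at $\{\sigma_0,\bar\sigma_0\}$ whose $\sigma_0$-component is essentially $\alpha$: if $\sigma_0$ is complex, set $\eta_{\sigma_0}=\alpha$ and let $\eta_{\bar\sigma_0}$ be determined by the reality condition; if $\sigma_0$ is real, replace $\alpha$ by its suitable symmetrization under $F_{\sigma_0}^\ast$. The identity above would then reduce, up to a nonzero factor, to $\int_{X_{\sigma_0}^{\an}}\omega(D,g_{D,\sigma_0})\wedge\alpha=0$, and the arbitrariness of $\alpha$ together with the standard duality between smooth forms and currents of complementary bidegree gives $\omega(D,g_{D,\sigma_0})=0$.

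The main delicate point is the bookkeeping around the reality condition: the curvature forms $\omega(D,g_{D,\sigma})$ are themselves of real type, so the $\sigma$ and $\bar\sigma$ summands in the displayed integral are related by $F_\sigma^\ast$ and the constraint on $\eta$ cannot be dropped. However, any smooth form can be symmetrized (respectively antisymmetrized if $d$ is even) with respect to the reality involution to produce a real-type representative whose pairing with $\omega(D,g_{D,\sigma_0})$ is, up to a factor of $2$, the original pairing. This suffices to exhibit a sufficiently rich supply of test cycles to deduce the pointwise vanishing, completing the argument.
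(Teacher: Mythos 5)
Your proof is correct and attacks the statement from essentially the same angle as the paper: test the numerical triviality of $\widehat{D}$ against arithmetic $\mathbb{R}$-cycles of codimension $d$ of the form $(0,(\eta_\sigma)_\sigma)$ concentrated at the archimedean places, observe that the $\ast$-product collapses to $\omega(D,g_{D,\sigma})\wedge\eta_\sigma$ because the geometric cycle is zero, and conclude that $\omega(D,g_{D,\sigma})$ integrates to zero against a large family of test currents. The genuine difference is in how you close the argument: the paper first invokes the Hahn--Banach theorem to produce a bounded functional $T$ realizing the Hodge norm of $\omega(D,g_\tau)$, then appeals to the density of smooth forms in the space of currents to approximate $T$ by Green currents $t_k$, and only then shows $t(\omega(D,g_\tau))=0$ for every such Green current. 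You instead restrict attention to smooth test forms $\eta_\sigma$ from the outset and conclude by the elementary duality fact that a smooth form of bidegree $(1,1)$ which pairs to zero with all smooth forms of bidegree $(d-1,d-1)$ must vanish; this dispenses entirely with the functional-analytic detour and is arguably cleaner. Your worry about the reality condition is also well placed and handled the same way the paper implicitly does (the paper simply says ``the proof being analogous if $\tau$ is real''): since $\omega(D,g_{D,\sigma_0})$ is itself of real type, pairing it against the anti-invariant part of $\alpha$ under $F_{\sigma_0}^\ast$ contributes nothing, so the symmetrization loses no information.
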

\begin{proof}
Assume that~$d\geq1$, as the claim is otherwise trivial, and fix~$\tau\in\Sigma$.
The complex vector space $\mathscr{A}^{1,1}(X^{\an}_\tau)$ of smooth forms on $X^{\an}_\tau$ of bidegree $(1,1)$ is a locally convex topological space when equipped with its Schwartz's topology from \cite[section~9]{deR84}.

Assuming by contradiction that~$\omega(D,g_\tau)\neq0$, one can consider the continuous linear functional on the linear span of $\omega(D,g_\tau)$ of $\mathscr{A}^{1,1}(X^{\an}_\tau)$ defined by sending $\omega(D,g_\tau)$ to~$1$.
The Hahn-Banach theorem for locally convex topological vector spaces (see \cite[Theorem~3.6]{Rud91}) ensures then the existence of a continuous linear functional $T$ on $\mathscr{A}^{1,1}(X^{\an}_\tau)$ such that~$T(\omega(D,g_\tau))=1$.
By definition, $T$ is a current of bidegree~$(d-1,d-1)$.

Because of~\cite[Theorem~12 at page~68]{deR84}, the set of smooth forms of bidegree~$(d-1,d-1)$ has dense image in the space of currents of the same bidegree.
In particular, $T$ can be approximated by a sequence $(t_k)_k$ of Green currents for the $d$-codimensional zero cycle on~$X^{\an}_\tau$, and so
\[
1=T(\omega(D,g_\tau))=\lim_k\,t_k(\omega(D,g_\tau))\,.
\]
To get a contradiction, we show that for all choice of a Green current $t$ for the $d$-codimensional zero cycle on~$X^{\an}_\tau$, one must have~$t(\omega(D,g_\tau))=0$.
To do this, assume that $\tau$ is a complex embedding, the proof being analogous if $\tau$ is real, and consider the arithmetic cycle of codimension~$d$:
\[
t X_\tau:=(0, (t_\sigma)_\sigma)\quad\text{with } t_\sigma=0 \text{ for } \sigma\notin\{\tau,\overline{\tau}\},\, t_\tau=t\text{ and }t_{\overline{\tau}}=(-1)^{d-1}F_\tau^\ast t\,.
\]
Since $\widehat D$ is numerically trivial, the explicit expression of $\left<\widehat D, tX_\tau\right>=0$ implies that~$(\omega(D,g_\tau)\wedge t)(1)=0$, as desired.
\end{proof}

Before listing other features of numerically trivial arithmetic~$\mathbb{R}$-divisors, we recall that an $\mathbb{R}$-divisor $D$ on $X$ is said to be \emph{divisorially $\pi$-numerically trivial} if $\deg(D_{|C})=0$ for any curve $C$ contained in the support of some fiber~$X_{\mathfrak p}$; here $\deg(D_{|C})$ denotes the usual geometric degree of $D$ along the curve~$C$.

\begin{lem}\label{forRHod}
Let $\widehat{D}=(D,(g_\sigma)_\sigma)$ be a numerically trivial arithmetic $\mathbb{R}$-divisor of~$X$.
Then:
\begin{enumerate}[{(1)}]
    \item $D$ is divisorially $\pi$-numerically trivial.
    \item $\deg(D_K\cdot\mathcal C)=0$ for any integral curve~$\mathcal C\subseteq X_K$.
    \item $\widehat D$ is nef; in particular, it is integrable in the sense of Moriwaki.
\end{enumerate}
\end{lem}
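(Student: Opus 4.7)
The plan is to establish (1), (2), (3) in this order, each via a direct computation with the arithmetic intersection pairing and its compatibility with $\pi\colon X\to B$.

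For (1), I would view an integral curve $C\subseteq X_{\mathfrak p}$ contained in a fiber as the vertical arithmetic $\mathbb{R}$-cycle $(C,(0)_\sigma)\in\widehat{Z}^d_{\mathbb R}(X)$ via Remark~\ref{canon_image_vert_arithm}. Numerical triviality of $\widehat D$, combined with Remark~\ref{intersect_vert_fib}, then gives
\[
0=\langle\widehat{D},(C,(0)_\sigma)\rangle=\widehat{\deg}\bigl(\pi_\ast(D\cdot C)\bigr).
\]
The projection formula yields $\pi_\ast(D\cdot C)=\deg(D|_C)\,[\mathfrak p]$, whose Arakelov degree (Example~\ref{exa_spec_O_K}) equals $\deg(D|_C)\log\mathfrak{N}(\mathfrak p)$. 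Since $\log\mathfrak{N}(\mathfrak p)>0$, this forces $\deg(D|_C)=0$.

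For (2), I would bootstrap from (1) using flatness of horizontal families. Let $\mathcal{C}\subseteq X_K$ be an integral curve and let $\overline{\mathcal C}\subseteq X$ be its Zariski closure: since $\overline{\mathcal C}$ is horizontal and $B$ is regular one-dimensional, the morphism $\overline{\mathcal C}\to B$ is automatically flat and projective, with generic fiber $\mathcal C$ and one-dimensional special fibers. After moving $D$ within its rational equivalence class so that its support avoids $\overline{\mathcal C}$, the constancy of the Euler characteristic in flat families of curves gives
\[
\deg\bigl(D_K\cdot\mathcal{C}\bigr)=\deg_{k(\mathfrak p)}\bigl(\mathcal{O}_X(D)|_{\overline{\mathcal C}_{\mathfrak p}}\bigr)
\]
for every prime $\mathfrak p\neq(0)$. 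Decomposing the fiber as $\overline{\mathcal C}_{\mathfrak p}=\sum_i n_i C_i$ with $C_i\subseteq X_{\mathfrak p}$ integral curves, the right-hand side equals $\sum_i n_i\deg_{k(\mathfrak p)}(D|_{C_i})$, which vanishes term by term thanks to (1).

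For (3), I would combine Proposition~\ref{omega_is_zero} with the numerical triviality hypothesis. Since $\omega(D,g_\sigma)=0$ for all $\sigma$, the archimedean positivity condition of nefness holds trivially; the only remaining check is $\langle\widehat{D},(C,(0)_\sigma)\rangle\geq 0$ for every integral curve $C\subseteq X$, interpreted via Remark~\ref{mor_int} when $C$ is horizontal. The vertical case follows directly from numerical triviality. For horizontal $C$, choose any Green current $g_\sigma$ for $C_\sigma$, so that $(C,(g_\sigma)_\sigma)$ is a genuine element of $\widehat{\CH}^d_{\mathbb R}(X)$. The $\ast$-product formula~\eqref{star_prod} together with $\omega(D,g_{D,\sigma})=0$ yields $\widehat{D}\cdot(0,(g_\sigma)_\sigma)=0$ in the sense of Remark~\ref{mor_int}; hence by bilinearity
\[
\langle\widehat{D},(C,(0)_\sigma)\rangle=\langle\widehat{D},(C,(g_\sigma)_\sigma)\rangle-\langle\widehat{D},(0,(g_\sigma)_\sigma)\rangle=0,
\]
the first term vanishing by numerical triviality. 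Thus $\widehat D$ is nef, and writing $\widehat{D}=\widehat{D}-0$ exhibits it as a difference of two nef arithmetic $\mathbb{R}$-divisors, hence integrable. I expect the main delicate step to be the careful handling of the Moriwaki-style pairing on horizontal curves in (3), whose cancellation ultimately hinges on Proposition~\ref{omega_is_zero}.
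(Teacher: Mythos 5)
Your arguments for parts (1) and (3) are essentially the paper's own: in (1) you exploit Remark~\ref{intersect_vert_fib} and the positivity of $\log\mathfrak{N}(\mathfrak p)$, and in (3) you use the bilinearity trick of adding and subtracting a Green current for $C_\sigma$ together with $\omega(D,g_\sigma)=0$ from Proposition~\ref{omega_is_zero}. (The preliminary case split into vertical and horizontal $C$ in (3) is harmless but unnecessary, since the same computation handles both.)

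For part (2), however, you take a genuinely different route from the paper. You spread $\mathcal C$ over $B$ by taking its Zariski closure $\overline{\mathcal C}$, observe that $\overline{\mathcal C}\to B$ is flat and projective with one-dimensional fibers, and invoke the local constancy of $\deg(\mathcal{O}_X(D)|_{\overline{\mathcal C}_{\mathfrak p}})$ in the flat family to pass from the generic fiber to a special fiber, where the degree decomposes over the irreducible components and vanishes term by term thanks to (1). This is correct (the only minor remark is that moving $D$ is not really needed to make sense of $\deg(\mathcal{O}_X(D)|_{\overline{\mathcal C}_{\mathfrak p}})$, since the line bundle restricts directly). The paper instead argues at the archimedean side: it uses invariance of the degree of a $0$-cycle under base change to $\mathbb{C}$, the identification $\omega(D,g_\sigma)=c_1(\overline{\mathscr{O}(D_\sigma)})$, and then computes $\deg(D_K\cdot\mathcal C)=\int_{\mathcal C_\sigma^{\an}}\omega(D,g_\sigma)=0$ by Proposition~\ref{omega_is_zero}. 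Your route is purely algebro-geometric and deduces (2) directly from (1), while the paper's route reuses the archimedean vanishing result a second time, which makes it shorter given that Proposition~\ref{omega_is_zero} has already been established. Both are valid; yours has the small advantage of making clear that (2) is a formal consequence of (1) by spreading out, independently of the analytic input.
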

\proof
For the first claim, let $C$ be any vertical curve~$C\subseteq X_{\mathfrak p}$, and as usual identify it with an arithmetic~$\mathbb{R}$-cycle.
Because of Remark~\ref{intersect_vert_fib}, one has that
\[
0
=
\left<\widehat{D},C\right>
=
\log\mathfrak N(\mathfrak p)\deg(D_{|C})\,,
\]
implying the conclusion.

For the proof of~(2), it is enough to recall that the degree of a $0$-cycle is invariant under base change, and that for any choice of $\sigma\in\Sigma$ one has that~$\omega(D,g_\sigma)=c_1(\overline{\mathscr{O}(D_\sigma)})$, where the line bundle $\mathscr O(D_\sigma)$ is equipped with the canonical hermitian metric defined from~$g_\sigma$.
Therefore:
\[
\deg(D_K\cdot\mathcal C)
=\deg(D_\sigma\cdot\mathcal C_\sigma)
=\int_{C^{\an}_\sigma}\omega(D,g_\sigma)\,,
\]
but the last integral vanishes because of Proposition~\ref{omega_is_zero}.

In order to prove the third claim, it is enough to show that $\widehat{\deg}\,\big(\widehat{D}_{|C}\big)=0$ for any integral curve $C$ of~$X$, since we already know by Proposition~\ref{omega_is_zero} that  $\omega(D,g_\sigma)=0$ for all~$\sigma$.
Choosing any Green current $t_\sigma$ for~$C_\sigma$, we have by bilinearity of the generalized intersection pairing that
\[
\widehat{\deg}\,\big(\widehat{D}_{|C}\big)
=
\left<\widehat{D},(C,(0)_\sigma)\right>
=
\left<\widehat{D},(C,(t_\sigma)_\sigma)\right>-\left<\widehat{D}, (0,(t_\sigma)_\sigma)\right>
=
-\left<\widehat{D}, (0,(t_\sigma)_\sigma)\right>
\]
as the first summand coincides with the intersection pairing for arithmetic~$\mathbb R$-cycles, and hence it vanishes by the assumption on~$\widehat{D}$.
Recalling that the generalized arithmetic intersection product is defined as in Remark~\ref{mor_int}, we get
\[
\widehat{D}\cdot(0,(t_\sigma)_\sigma)=\left (0, (\omega(D,g_\sigma)\wedge t_\sigma)_\sigma\right)\,.
\]
But in Proposition~\ref{omega_is_zero} we showed that $\omega(D,g_\sigma)=0$ for all~$\sigma$, therefore the claim follows.
\endproof

We finally recall a very useful lemma from commutative algebra.
Here and in the following, we silently identify an element of a $\mathbb{Z}$-module $M$ with its canonical image in~$M\otimes_{\mathbb{Z}}\mathbb{R}$.

\begin{lem}\label{alg_lem}
Let $M$ be a~$\mathbb Z$-module.
Then the following assertions hold:
\begin{enumerate}[{(1)}]
    \item For~$x\in M\otimes_{\mathbb Z} \mathbb R$, there are $x_1,\ldots x_r\in M$ and $\lambda_1,\ldots, \lambda_r\in\mathbb R$ such that $\lambda_1,\ldots, \lambda_r$ are linearly independent over $\mathbb Q$ and~$x=\sum^r_{i=1}\lambda_ix_i$.
    \item Let $x_1,\ldots x_r\in M$ and $\lambda_1,\ldots, \lambda_r\in\mathbb R$ such that $\lambda_1,\ldots, \lambda_r$ are linearly independent over~$\mathbb Q$.
    If $\sum^r_{i=1}\lambda_ix_i=0$ in~$M\otimes_{\mathbb Z} \mathbb R$, then $x_1,\ldots x_r$ are torsion elements of~$M$.
\end{enumerate}
\end{lem}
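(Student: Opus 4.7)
The plan is to reduce both parts to standard facts about tensor products with $\mathbb{Q}$-vector spaces, exploiting the isomorphism $M\otimes_{\mathbb Z}\mathbb R\simeq(M\otimes_{\mathbb Z}\mathbb Q)\otimes_{\mathbb Q}\mathbb R$ and the fact that a $\mathbb{Q}$-linearly independent set in $\mathbb{R}$ can always be extended to a $\mathbb{Q}$-basis.

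For part~(1), I would start by writing an arbitrary $x\in M\otimes_{\mathbb Z}\mathbb R$ in some form $x=\sum_{j=1}^s\mu_j y_j$ with $y_j\in M$ and $\mu_j\in\mathbb R$ (which is possible by the very definition of the tensor product). The real numbers $\mu_1,\ldots,\mu_s$ span a finite dimensional $\mathbb Q$-vector subspace $V$ of $\mathbb R$; I would then pick a $\mathbb Q$-basis $\lambda_1,\ldots,\lambda_r$ of $V$ and expand each $\mu_j$ as a rational combination of the $\lambda_i$'s. Collecting terms and clearing a common denominator (which can be absorbed into the scalars $\lambda_i$, since rescaling preserves $\mathbb{Q}$-linear independence) yields the required expression $x=\sum_{i=1}^r\lambda_i x_i$ with $x_i\in M$.

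For part~(2), the plan is to complete $\lambda_1,\ldots,\lambda_r$ to a $\mathbb Q$-basis $\{\lambda_\alpha\}$ of $\mathbb R$; this is possible in virtue of the axiom of choice. Then, using the decomposition $\mathbb R=\bigoplus_\alpha \mathbb Q\lambda_\alpha$ as $\mathbb Q$-vector spaces, I would deduce an internal direct sum decomposition
\[
M\otimes_{\mathbb Z}\mathbb R\simeq(M\otimes_{\mathbb Z}\mathbb Q)\otimes_{\mathbb Q}\mathbb R\simeq \bigoplus_{\alpha}(M\otimes_{\mathbb Z}\mathbb Q)\cdot\lambda_\alpha
\]
The vanishing of $\sum_{i=1}^r\lambda_i x_i$ in $M\otimes_{\mathbb Z}\mathbb R$ would then force the vanishing of each component, i.e.\ $x_i\otimes 1=0$ in $M\otimes_{\mathbb Z}\mathbb Q$ for every $i$. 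Standard properties of the localization $M\otimes_{\mathbb Z}\mathbb Q$ translate this condition into the assertion that each $x_i$ is a torsion element of~$M$, concluding the argument.

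There is no serious obstacle here: the statement is a formal consequence of the flatness of $\mathbb{Q}$ over $\mathbb{Z}$ and the elementary structure of $\mathbb{R}$ as a $\mathbb{Q}$-vector space. The only mild subtlety is keeping track of the distinction between $M\otimes_{\mathbb Z}\mathbb R$ and $M\otimes_{\mathbb Z}\mathbb Q$ when invoking the direct sum decomposition, and making sure that rescaling in part~(1) preserves $\mathbb{Q}$-linear independence of the chosen real scalars.
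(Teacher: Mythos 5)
Your proof is correct. The paper itself offers no argument here---it simply cites \cite[Lemma~1.1.1]{Mor13}---so there is nothing internal to compare against, but your self-contained proof follows the standard route one expects in that reference: decompose $\mathbb R$ as a $\mathbb Q$-vector space via a Hamel basis, use that tensor product commutes with direct sums so that $(M\otimes_{\mathbb Z}\mathbb Q)\otimes_{\mathbb Q}\mathbb R\simeq\bigoplus_\alpha(M\otimes_{\mathbb Z}\mathbb Q)$, read off the vanishing of each $\lambda_i$-component, and invoke that $m\otimes 1=0$ in $M\otimes_{\mathbb Z}\mathbb Q$ iff $m$ is torsion. The one place that needs the care you already flag is in part~(1): after expanding each $\mu_j$ rationally in the basis $\lambda_1,\ldots,\lambda_r$, the coefficients are only in $\mathbb Q$, and since the tensor is over $\mathbb Z$ one cannot slide a non-integer scalar across $\otimes$; clearing a common denominator $N$ and replacing $\lambda_i$ by $\lambda_i/N$ (which preserves $\mathbb Q$-linear independence) fixes this, exactly as you say.
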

\proof
See \cite[Lemma~1.1.1]{Mor13}.
\endproof

With the previous results in mind, we dispose of all the ingredients needed to state and prove the main result of this section.

\begin{thm}\label{hi_bo}
The following equality of $\mathbb R$-vector spaces holds:
\[
\widehat{\Num}^{\tr}_{\mathbb R}(X)=\widehat\Rat_{\mathbb R}^1(X)\,.
\]
In other words, two arithmetic $\mathbb{R}$-divisors are numerically equivalent if and only if they are equal in~$\widehat{\CH}^1_{\mathbb R}(X)$. 
\end{thm}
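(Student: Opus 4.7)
The inclusion $\widehat{\Rat}_{\mathbb R}^1(X) \subseteq \widehat{\Num}_{\mathbb R}^{\tr}(X)$ is immediate, as the arithmetic intersection pairing factors through the Chow groups. For the reverse direction, the plan is to reduce a numerically trivial $\widehat D = (D, (g_\sigma)_\sigma)$ in three successive stages, using in turn the real Hodge index theorem, the Zariski-type negativity on fibers, and Dirichlet's unit theorem, with the help of a fixed ample arithmetic divisor~$\widehat A$.

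\emph{Stage 1: kill the generic fiber.} By Lemma~\ref{forRHod}, $\widehat D$ is integrable and $\deg(D_K \cdot (A_K)^{d-1}) = 0$, while numerical triviality and associativity give $\langle \widehat D^2, \widehat A^{d-1}\rangle = \langle\widehat D,\widehat D\cdot\widehat A^{d-1}\rangle = 0$. The equality case of Theorem~\ref{RHod} then forces $D_K = 0$ in $\CH^1_{\mathbb R}(X_K)$, so $D_K = \sum_i a_i\,\divi(f_{K,i})$ for some $a_i \in \mathbb R$ and $f_{K,i} \in K(X_K)^\times = K(X)^\times$. Replacing $\widehat D$ by $\widehat D_1 := \widehat D - \sum_i a_i\,\widehat{\divi}(f_i)$, still numerically trivial by the easy direction, reduces to the case of a vertical underlying divisor $D_1 = \sum_{\mathfrak p \in S} D_{1,\mathfrak p}$.

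\emph{Stage 2: handle the vertical part prime by prime.} For each $\mathfrak p \in S$, I pair $\widehat D_1$ with $D_{1,\mathfrak p} \cdot \widehat A^{d-1}$ and use associativity, Remark~\ref{intersect_vert_fib} and the disjointness of fibers over distinct primes to collapse the resulting intersection number to $\langle D_{1,\mathfrak p}, D_{1,\mathfrak p}\rangle_{\mathfrak p, \widehat A}$; Proposition~\ref{hi_li} then forces $D_{1,\mathfrak p} = c_{\mathfrak p} X_{\mathfrak p}$ for some $c_{\mathfrak p} \in \mathbb R$. Finiteness of $\Pic(O_K)$ yields $\alpha_{\mathfrak p} \in K^\times$ and $m_{\mathfrak p} \in \mathbb N_{\geq 1}$ with $(\alpha_{\mathfrak p}) = \mathfrak p^{m_{\mathfrak p}}$, and Remark~\ref{rem_div_const_expl} allows me to trade each $c_{\mathfrak p} X_{\mathfrak p}$ for $(c_{\mathfrak p}/m_{\mathfrak p})\widehat{\divi}(\alpha_{\mathfrak p})$ up to a real-constant archimedean correction on each $X_\sigma^{\an}$. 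After these manipulations, $\widehat D$ becomes congruent modulo $\widehat{\Rat}_{\mathbb R}^1(X)$ to an arithmetic cycle of the form $(0, (h_\sigma)_\sigma)$ that is still numerically trivial.

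\emph{Stage 3: analyse the archimedean residue.} Proposition~\ref{omega_is_zero} applied to this residue gives $dd^c h_\sigma = 0$; elliptic regularity together with the compactness and connectedness of $X_\sigma^{\an}$ forces each $h_\sigma$ to be a real constant $c''_\sigma$, with $c''_\sigma = c''_{\bar\sigma}$ by the real-type condition. The residue is then the pullback via $\pi$ of some $\xi_0 \in \widehat{\CH}_{\mathbb R}^1(B)$, and pairing $\pi^\ast \xi_0$ with $\widehat A^d$ via the projection formula, combined with the identification $\widehat{\CH}_{\mathbb R}^0(B) \simeq \mathbb R$ of Example~\ref{CH_0}, forces $\widehat{\deg}(\xi_0) = \tfrac12\sum_\sigma c''_\sigma = 0$. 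Dirichlet's unit theorem (Theorem~\ref{dirichlet_unit_theorem}) then writes $(c''_\sigma)_\sigma$ in the form $(\sum_j a_j\log|\sigma(\xi_j)|^2)_\sigma$ with $\xi_j \in O_K^\times$, so $(0, (h_\sigma)_\sigma) = -\sum_j a_j\,\widehat{\divi}(\xi_j) \in \widehat{\Rat}_{\mathbb R}^1(X)$, completing the proof. I expect the most delicate part to be the coherent bookkeeping of archimedean data across the three stages: each reduction leaks a Green-current correction into the next stage, and the reality and conjugation conditions on the currents must be carried all the way to the end, where they are precisely what makes Dirichlet's theorem applicable to the final constant class on~$B$.
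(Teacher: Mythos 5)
Your proof is correct and reaches the same endpoint as the paper's, but it follows a genuinely different route through the middle. The paper begins by invoking Lemma~\ref{alg_lem}.(1) to write $D=\sum_i\lambda_iD_i$ with $\lambda_i$ linearly independent over~$\mathbb{Q}$ and $D_i\in\Div(X)$, carefully adjusts the Green currents so that $\widehat D=\sum_i\lambda_i\widehat D_i$ with $\omega(D_i,g_{i,\sigma})=0$, and then, after applying Theorem~\ref{RHod} to kill $D_K$, uses Lemma~\ref{alg_lem}.(2) to conclude that each integral class $D_{i,K}$ is torsion; this allows a further application of the \emph{integral} Hodge index theorem (Theorem~\ref{ZHod}) to each $n_iD_i$, whose equality case places $\ell_iD_i$ in $\im(\pi^\ast)$, whence it is torsion in $\CH^1(X)$ because $\CH^1(B)$ is finite. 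You bypass both Lemma~\ref{alg_lem} and Theorem~\ref{ZHod} entirely: after Stage~1, which is the same use of Theorem~\ref{RHod}, you work directly with the real divisor, pair $\widehat D_1$ fiberwise with $D_{1,\mathfrak p}\cdot\widehat A^{d-1}$ and invoke the Zariski-type Proposition~\ref{hi_li} to conclude $D_{1,\mathfrak p}=c_{\mathfrak p}X_{\mathfrak p}$ prime by prime. Both routes then trade the remaining vertical cycles for principal divisors modulo constants using the finiteness of $\Pic(O_K)$, and finish identically with Proposition~\ref{omega_is_zero}, elliptic regularity, and Dirichlet's unit theorem. What the paper's approach buys is a proof that passes through the stronger structural statement of Theorem~\ref{ZHod} (that the class lies in $\im(\pi^\ast)$), at the cost of the $\mathbb{Q}$-linear-independence bookkeeping and the Green-current adjustment; your approach is more streamlined and stays entirely in the category of $\mathbb{R}$-divisors, relying on the elementary negativity of the intersection form on a fiber rather than on Moriwaki's integral equality case. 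The delicate points you flag — carrying the archimedean corrections from stage to stage and tracking the reality condition $c''_\sigma=c''_{\bar\sigma}$ down to the final application of Dirichlet — are indeed the places where the argument must be kept honest, and your outline handles them correctly.
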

\proof
Clearly~$\widehat\Rat_{\mathbb R}^1(X)\subseteq \widehat{\Num}^{\tr}_{\mathbb R}(X)$, so let us show the other inclusion.
When~$d=0$, the statement is proven in Example~\ref{num_tri_on_O_K}, therefore we assume for the rest of the proof that~$d\geq1$, so that one can apply the various versions of the arithmetic Hodge index theorem.
To do that, we also fix the choice of an ample arithmetic divisor $\widehat{A}$ on~$X$, whose existence is granted by the projectivity assumption on~$X$, see for instance~\cite[Corollary~2.11]{Cha17}.

Our first goal is to show that inside $\widehat{\CH}^1_{\mathbb R}(X)$ a numerically trivial $\mathbb{R}$-divisor $\widehat D=(D,(g_\sigma)_\sigma)$ is the pull-back of an arithmetic $\mathbb{R}$-divisor on~$B$.

Thanks to Lemma~\ref{alg_lem}.(1) we can find $\lambda_1,\ldots\lambda_r\in\mathbb R$ linearly independent over $\mathbb Q$ and $D_1,\ldots, D_r\in \Div(X)$ such that~$D=\sum_i\lambda_iD_i$.
We notice that every $D_i$ is divisorially $\pi$-numerically trivial.
Indeed, for any vertical curve~$C$, one has by Lemma~\ref{forRHod}.(1) that
\[
0=\deg (D_{|C})
=
\sum_{i=1}^r\lambda_i\deg({D_i}_{|C})\,.
\]
So, by the linear independence of the coefficients $\lambda_i$ over $\mathbb Q$ we conclude that $\deg({D_i}_{|C})=0$ for all~$i=1,\ldots,r$.
The analogous argument applied to the generic fiber, together with Lemma~\ref{forRHod}.(2), shows that for any integral curve $\mathcal C\subseteq X_K$ we have~$\deg(D_{i,K}\cdot\mathcal C)=0$.
This last observation implies that, for any embedding~$\sigma$, the Chern class $c_1(\mathscr{O}(D_{i,\sigma}))$ is~$0$.
Therefore, we can choose a Green current $g_{i,\sigma}$ for $D_{i,\sigma}$ such that~$\omega(D_i, g_{i,\sigma})=0$, see~\cite[3.3.5 Theorem,~(ii)]{GS90a}.

Since we also have $\omega(D, g_\sigma)=0$ for every $\sigma$ thanks to Proposition~\ref{omega_is_zero}, we deduce that the current
\[
G_\sigma:=g_\sigma-\sum_{i=1}^r\lambda_i g_{i,\sigma}
\]
satisfies~$dd^c(G_\sigma)=0$.
Now put for example~$g^\prime_{1,\sigma}:=g_{1,\sigma}+\frac{G_\sigma}{\lambda_1}$.
Then, $g^\prime_{i,\sigma}$ is again a Green current for $D_{i,\sigma}$ satisfying $\omega(D_1,g^\prime_{1,\sigma})=0$ and by construction:
\[
\lambda_1 g'_{1,\sigma}+\sum^{r}_{i=2}\lambda_i g_{i,\sigma}=g_\sigma\,.
\]
In other words, up to replacing  $g_{1,\sigma}$ by~$g^\prime_{1,\sigma}$, we have proved that there exists a decomposition
\[
\widehat{D}=\sum_{i=1}^r\lambda_i\widehat{D}_i\,,
\]
where for all $i=1,\ldots,r$ the arithmetic divisor $\widehat{D}_i=(D_i,(g_{i,\sigma})_\sigma)$ is such that $D_i\in\Div(X)$ is divisorially $\pi$-numerically trivial and $\omega(D_i,g_{i,\sigma})=0$ for all~$\sigma$.

Using the entries (2) and (3) of Lemma~\ref{forRHod} and the fact that $\widehat{D}$ is numerically trivial, the arithmetic Hodge index theorem for real divisors stated in Theorem~\ref{RHod} implies that
\[
\sum_{i=1}^r\lambda_i\, D_{i,K}=D_K=0
\quad \textrm{inside  } {\CH}^1_{\mathbb R}(X_K)\,.
\]
Now it follows from Lemma \ref{alg_lem}.(2) that each $D_{i,K}$ is a torsion element in~${\CH}^1(X_K)$, which in turn means that for some~$n_i\in\mathbb{N}_{\geq1}$ the divisor $n_iD_i$ is (up to linear equivalence) vertical and divisorially $\pi$-numerically trivial.
In particular, for the fixed ample arithmetic divisor $\widehat A$ on~$X$, we have that $(n_i\widehat{D}_i)\cdot \widehat{A}^{d-1}$ can be represented by a vertical arithmetic cycle $(C,(t_\sigma)_\sigma)$ of codimension~$d$.
Therefore,
\[
\deg\left((n_iD_i)_K\cdot(A_K)^{d-1}\right)=0
\]
and
\[
\left<(n_i \widehat{D}_i)^2,\widehat A^{d-1}\right>
=
\left<n_i \widehat{D}_i,(C,(t_\sigma)_\sigma)\right>
=
\left<n_i \widehat{D}_i,(C,(0)_\sigma)\right>+n_i\left<\widehat{D}_i,(0,(t_\sigma)_\sigma)\right>
=
0\,,
\]
where the last equality descends from Remark~\ref{intersect_vert_fib}, the fact that $n_iD_i$ is divisorially $\pi$-numerically trivial and that $\omega(D_i,g_{i,\sigma})=0$ for all~$\sigma$.
Thanks to the arithmetic Hodge index theorem in Theorem~\ref{ZHod}, we conclude that there exist a multiple $m_i$ of $n_i$ and an element $\widehat{E}_i\in \widehat{\CH}^1(B)$ such that~$m_i\widehat{D}_i=\pi^{\ast}(\widehat{E}_i)$.
Therefore, as the same equality holds in~$\widehat{\CH}_{\mathbb{R}}^1(X)$, the linearity of the pull-back~$\pi^\ast\colon\widehat{\CH}_{\mathbb{R}}^1(B)\to\widehat{\CH}_{\mathbb{R}}^1(X)$ gives
\[
\widehat{D}=\sum_{i=1}^r\lambda_i\widehat{D}_i=\sum_{i=1}^r\frac{\lambda_i}{m_i}\,\pi^\ast(\widehat{E}_i)=\pi^\ast(\widehat{E})\,,
\]
for~$\widehat{E}:=\sum_{i=1}^r(\lambda_i/m_i)\widehat{E}_i$.

To conclude, it is hence sufficient to show that $\widehat{E}$ vanishes in the first arithmetic Chow space of~$B$.
We know by Example~\ref{exa_spec_O_K} that $\widehat{E}$ can be written in $\widehat{\CH}^1_\mathbb{R}(B)$ in the form~$(0,(c_\sigma)_\sigma)$, where $c_\sigma$ are real constants satisfying~$c_{\sigma}=c_{\overline{\sigma}}$, and that $\widehat{E}=0$ if and only if~$\sum_\sigma c_\sigma=0$.
To check this last equality, it is enough to intersect the numerically trivial arithmetic $\mathbb{R}$-divisor $\widehat{D}=\pi^{\ast}(\widehat{E})$ with an arbitrary arithmetic cycle $\widehat{C}$ of codimension $d$ on $X$ satisfying~$\deg(C_K)\neq 0$.
Indeed, the relation in \cite[Theorem~4.3.9]{GS90a} and the definitions of the intersection product in \eqref{int_prod} and of the Arakelov degree in Example~\ref{exa_spec_O_K} yield
\[
0
=
\left<\widehat{D},\widehat{C}\right>
=
\left<\pi^{\ast}(\widehat{E}),\widehat{C}\right>
=
\widehat{\deg}\left(\widehat{E}\cdot\pi_{\ast}(\widehat{C})\right)
=
\frac{\deg(C_K)}{2}\sum_{\sigma\in\Sigma}c_\sigma\,,
\]
as desired.
\endproof

One interpretation of the result in Theorem~\ref{hi_bo} is that the numerical equivalence of arithmetic $\mathbb{R}$-divisors is a very restraining relation, preventing the novelty of a natural definition for the numerical Chow real vector space in the context of arithmetic geometry.

It also asserts that the arithmetic intersection pairing in \eqref{arithmetic intersection pairing} is non-degenerate in the first argument when~$q=1$, proving part of a conjecture of H. Gillet and C. Soul\'e, see \cite[Conjecture~1]{GS94}.
Notice that Theorem~\ref{hi_bo} together with one part of the arithmetic hard Lefschetz conjecture in \cite[Conjecture~2.(i)]{GS94} would be sufficient to confirm the full non-degeneracy for~$q=1$.
To summarize, we have the following:

\begin{cor}
Let $X$ be a projective arithmetic variety of relative dimension $d\ge1$ over the ring of integers of a number field.
Then, the pairing
\[
\widehat{\CH}^1_{\mathbb R}(X)\times\widehat{\CH}^d_{\mathbb R}(X)\longrightarrow\mathbb{R}
\]
is non-degenerate in the first argument.
Moreover, it is non-degenerate if the Lefschetz operator
\[
L^{d-1}\colon \widehat{\CH}^1_{\mathbb R}(X)\to \widehat{\CH}^d_{\mathbb R}(X)
\]
associated to an ample arithmetic divisor on $X$ is surjective.
\end{cor}


\begin{rem}
The general form of Gillet--Soul\'e's conjecture predicts that the pairing \eqref{arithmetic intersection pairing} is non-degenerate for all~$q\in\{0,\ldots,d+1\}$ and would be implied by the verification of arithmetic analogues of Grothendieck's standard conjectures, see \cite[section~1]{GS94} for details.

Regarding these last ones, we recall that it is sufficient to check them for Arakelov cycles, as proved in \cite[Proposition~3.1]{Kue95}.
Moreover, they hold true when $d=1$ (thanks to the Hodge index theorem for arithmetic surfaces proved by G. Faltings and P. Hriljac) or~$q=0$ (by \cite[Theorem~1.(i)]{GS94}).
Also, K. K{\"u}nnemann in~\cite{Kue95}, K. K{\"u}nnemann and V. Maillot in \cite{KM00} and Y. Takeda in \cite{Tak98} proved that both conjectures holds true for some special arithmetic varieties, including projective spaces.
A. Moriwaki gave a partial answer to the second one when $q=1$ in \cite[Theorem~A]{Mor96} as a consequence of the higher dimensional arithmetic Hodge index theorem.
\end{rem}

\section{The first Arakelov-Chow space and its dimension}

As in the previous sections, we consider a number field $K$ with ring of integers $O_K$ and set of embeddings $\Sigma$ of $K$ into~$\mathbb{C}$, and a projective arithmetic variety $X$ of relative dimension $d$ over~$O_K$.
Moreover, for all~$\sigma\in\Sigma$, we fix the choice of a K\"ahler form $\Omega_\sigma$ on $X^{\an}_\sigma$ such that $F_\sigma^\ast(\Omega_\sigma)=-\Omega_{\overline{\sigma}}$ and we put~$\Omega=(\Omega_\sigma)_{\sigma\in\Sigma}$.
Notice that, for all embedding~$\sigma$, the choice of $\Omega_\sigma$ defines a $\bar{\partial}$-Laplace operator on the space of smooth forms on~$X^{\an}_\sigma$, and then a notion of harmonicity.

We take advantage of this extra analytic structure to recall the definition of Arakelov $\mathbb{R}$-cycles and to attach to $X$ its first Arakelov-Chow space.
We then show that such a real vector space is finite dimensional and we express its dimension in terms of the geometry and arithmetic of~$X$.

\subsection{Arakelov \texorpdfstring{$\mathbb{R}$}{R}-divisors}

It is not difficult to see that, whenever~$d\geq1$, the real vector space of arithmetic $\mathbb{R}$-divisors modulo rational equivalence has infinite dimension.
For this reason it is useful, following \cite[section~5.1]{GS90a}, to restrict our attention to a particular type of arithmetic~$\mathbb R$-divisors, which turn out to be precisely those originally introduced by Arakelov in \cite{Ara74} when~$d=1$.
The definition can in fact be given for arithmetic $\mathbb{R}$-cycles of arbitrary codimension~$q\in\{0,\ldots,d+1\}$; to do this, we recall that if $(Y,(g_\sigma)_\sigma)$ is an arithmetic $\mathbb{R}$-cycle of codimension $q$ there exists, for all embedding~$\sigma$, a smooth form $\omega(Y,g_\sigma)$ of bidegree $(q,q)$ on $X_\sigma^{\an}$ for which~$dd^cg_\sigma+\delta_{Y_\sigma}=[\omega(Y,g_\sigma)]$.

\begin{defin}\label{defn Arakelov cycles}
An \emph{Arakelov $\mathbb{R}$-cycle of codimension $q$ on $X$} is an element $(Y,(g_\sigma)_\sigma)$ of~$\widehat{Z}^q_\mathbb{R}(X)$ such that $\omega(Y,g_\sigma)$ is harmonic with respect to~$\Omega_\sigma$, for all~$\sigma\in\Sigma$.
Arakelov $\mathbb{R}$-cycles of codimension $q$ on $X$ form a real vector subspace, which is denoted by~$\overline{Z}^q_\mathbb{R}(X)$.
An element of~$\overline{Z}^1_\mathbb{R}(X)$ is simply called an \emph{Arakelov~$\mathbb R$-divisor}.
\end{defin}

Thanks to the Lelong-Poincar\'e formula, the arithmetic $\mathbb{R}$-cycles in $\widehat{\Rat}_{\mathbb R}^q(X)$ are actually Arakelov~$\mathbb{R}$-cycles, therefore we can define the analogue of the arithmetic Chow spaces of $X$ in this setting.

\begin{defin}
The \emph{$q$-th Arakelov-Chow space of $X$} is defined as the quotient
\[
\overline{\CH}^q_{\mathbb R}(X)
:=
\overline{Z}^q_\mathbb{R}(X)/\widehat{\Rat}_{\mathbb R}^q(X)\,,
\]
or equivalently as the image of $\overline{Z}_{\mathbb{R}}^q(X)$ in~$\widehat{\CH}^q_{\mathbb R}(X)$.
\end{defin}

Notice that, for simplicity of notations, we dropped the dependence from the choice of the K\"ahler form $\Omega$ in the definitions of ~$\overline{Z}^q_\mathbb{R}(X)$ and~$\overline{\CH}^q_{\mathbb R}(X)$, as we suppose it fixed throughout all the section.

In the easiest cases, the Arakelov-Chow spaces match the arithmetic Chow spaces introduced in Definition~\ref{def_arith_CH}, as the following two examples show.

\begin{exa}
For any choice of~$X$, the zero codimensional arithmetic cycle $(X,(0)_\sigma)$ is of Arakelov type, as~$\omega(X,0)=1$.
In particular, as in Example~\ref{CH_0} one has that~$\overline{\CH}^0_{\mathbb R}(X)\simeq\mathbb{R}$.
\end{exa}

\begin{exa}\label{exa_ar_spec_O_K}
Let~$X=\spec O_K$.
Any arithmetic $\mathbb{R}$-divisor on $X$ is also an Arakelov~$\mathbb{R}$-divisor, since $X_\sigma^{\an}$ is zero dimensional.
Therefore, it follows from Example~\ref{exa_spec_O_K} that $\overline{\CH}^1_{\mathbb R}(\spec O_K)$ is a real vector space of dimension~$1$.
\end{exa}

Being an Arakelov $\mathbb{R}$-divisor becomes a more restrictive requirement when~$d\geq1$.
The following well known proposition gives a beautiful and simple way of expressing Arakelov~$\mathbb R$-divisors.
To state it, we denote by $\mathbb R X_\sigma$ the one-dimensional $\mathbb R$-vector space formally generated by the symbol~$X_\sigma$.
There is a natural involution $F_\infty$ on $\bigoplus_{\sigma\in\Sigma}\mathbb R X_\sigma$ obtained by exchanging the real coefficients of $X_\sigma$ and $X_{\overline{\sigma}}$ for all~$\sigma$; the subset of elements of the direct sum which are fixed by this involution is denoted, as usual, by the use of a superscript.

\begin{prop}\label{classical_ar}
The following map between $\mathbb R$-vector spaces is an isomorphism:
\begin{eqnarray*}
\overline{Z}^1_\mathbb{R}(X)&\longrightarrow& Z^1_{\mathbb R}(X)\oplus\left(\bigoplus_{\sigma\in\Sigma}\mathbb R X_\sigma\right)^{F_\infty}
\\
(D,(g_\sigma)_\sigma)&\longmapsto& D+\sum_{\sigma\in\Sigma} H_\sigma(g_\sigma) X_\sigma\,,
\end{eqnarray*}
where $H_\sigma$ is the orthogonal projection onto the space of harmonic functions on~$X^{\an}_\sigma$.
\end{prop}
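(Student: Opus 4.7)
The plan is to check that the displayed map, which is patently $\mathbb{R}$-linear, is bijective and well-defined into the $F_\infty$-fixed subspace; the key inputs are Hodge theory for $(0,0)$-currents on each compact K\"ahler manifold $X_\sigma^{\an}$ and the regularity of harmonic distributions. Well-definedness in the $F_\infty$-fixed subspace follows from the reality condition $F_\sigma^\ast g_\sigma=g_{\bar\sigma}$ (the sign $(-1)^{q-1}$ being $1$ for $q=1$) together with the compatibility of the involutions $F_\sigma$ with the harmonic projections attached to the fixed K\"ahler forms $\Omega_\sigma$, which yields $H_\sigma(g_\sigma)=H_{\bar\sigma}(g_{\bar\sigma})$.

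For injectivity, I would assume that an Arakelov $\mathbb{R}$-divisor $(D,(g_\sigma)_\sigma)$ maps to zero, so that $D=0$. Then $\omega(0,g_\sigma)=dd^c g_\sigma$ is simultaneously harmonic (by the Arakelov hypothesis) and $d$-exact in the sense of currents, so Hodge theory on the compact K\"ahler manifold $X_\sigma^{\an}$ forces it to vanish. Hence $dd^c g_\sigma=0$, and the regularity lemma \cite[Theorem~1.2.2.(iii)]{GS90a} identifies $g_\sigma$ with a smooth harmonic function, i.e.\ a constant on the connected manifold $X_\sigma^{\an}$; this constant equals $H_\sigma(g_\sigma)$, which vanishes by assumption, and therefore $g_\sigma=0$ as a current.

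For surjectivity, given $D\in Z^1_{\mathbb{R}}(X)$ and real constants $c_\sigma$ with $c_\sigma=c_{\bar\sigma}$, I would start from any Green current $g'_\sigma$ for $D_\sigma$ (\cite[Theorem~1.3.5]{GS90a}), taken symmetrically with respect to $F_\infty$. Its curvature $\omega(D,g'_\sigma)$ is a smooth $(1,1)$-form, so the $\partial\bar\partial$-lemma on compact K\"ahler manifolds provides a smooth real function $\eta_\sigma$, compatible with complex conjugation, satisfying $\omega(D,g'_\sigma)-H(\omega(D,g'_\sigma))=dd^c\eta_\sigma$. The current
\[
g_\sigma:=g'_\sigma-\eta_\sigma+\bigl(c_\sigma-H_\sigma(g'_\sigma-\eta_\sigma)\bigr)
\]
is then a Green current for $D_\sigma$ with harmonic curvature $H(\omega(D,g'_\sigma))$ and harmonic projection $c_\sigma$, so $(D,(g_\sigma)_\sigma)$ is an Arakelov $\mathbb{R}$-divisor mapping to the prescribed element.

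The only delicate point I foresee is the interplay between smooth forms and currents: in each step above, however, the use of Hodge decomposition is funneled through the smoothness of $\omega(D,g_\sigma)$, to which the classical smooth $\partial\bar\partial$-lemma applies directly, while the passage from a vanishing $dd^c g_\sigma$ to smoothness of $g_\sigma$ itself is supplied by the regularity lemma of Gillet-Soul\'e.
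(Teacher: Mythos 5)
Your proof is correct and fleshes out, via Hodge theory on each compact K\"ahler manifold $X_\sigma^{\an}$ together with the Gillet--Soul\'e regularity lemma, exactly the argument that the paper delegates to \cite[section~5.1.1, Lemma]{GS90a}, including the key observation that $H_\sigma(g_\sigma)$ is a constant because $X_\sigma^{\an}$ is connected and compact. This is the same approach as the paper's (deferred) proof, so nothing further is needed.
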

\proof
First of all, remark that the notion of arithmetic $\mathbb{R}$-divisor from Definition~\ref{defn arithmetic cycles} agrees with the one given in \cite[section~3.3.3]{GS90a}, since in this case the images of $\partial$ and of $\bar\partial$ in the space of currents of bidegree $(0,0)$ are trivial.
Therefore, Arakelov $\mathbb{R}$-divisors according to Definition \ref{defn Arakelov cycles} coincide with the ones considered in the cited reference, and the claim is a real-coefficients version of \cite[section~5.1.1, Lemma]{GS90a}.
Notice in particular that for all $\sigma$ one must have~$H_\sigma(g_\sigma)\in\mathbb{R}$, since $X^{\an}_\sigma$ is a connected compact complex manifold.
\endproof

If an Arakelov $\mathbb R$-divisor is identified with an element of $Z^1_{\mathbb R}(X)\oplus\left(\bigoplus_\sigma\mathbb R X_\sigma\right)^{F_\infty}$ via the isomorphism of the above proposition, we say that it is expressed in \emph{explicit form}.

\begin{nota}
From now on, we will freely use the identification given by the statement of Proposition~\ref{classical_ar}.
To avoid possible confusions, we will always adopt the different notations with parentheses and with sums to distinguish between an Arakelov $\mathbb{R}$-divisor $(D,(g_\sigma)_\sigma)$ and its explicit form~$D+\sum c_\sigma X_\sigma$.
\end{nota}

An Arakelov $\mathbb R$-divisor is said to be \emph{vertical} if it is vertical as an arithmetic~$\mathbb{R}$-divisor.
The subspace of vertical Arakelov $\mathbb R$-divisors is denoted by~$\overline{\ver}_{\mathbb R}(X)$.
Hence one clearly has:
\begin{equation}\label{eq-vert-decomp}
\overline{\ver}_{\mathbb R}(X)
=
\bigoplus_{\mathfrak{p}\neq(0)}Z^1_\mathbb{R}(X)_{\mathfrak p}\oplus\left(\bigoplus_{\sigma\in\Sigma}\mathbb R X_\sigma\right)^{F_\infty}\,,
\end{equation}
where we recall that $Z_{\mathbb{R}}^1(X)_\mathfrak{p}$ stands for the real vector space of $\mathbb{R}$-divisors of $X$ with support contained in~$X_\mathfrak{p}$.

\begin{rem}
Recall that if $D$ is a vertical $\mathbb{R}$-divisor on~$X$, there is a canonical way to view it as an arithmetic $\mathbb{R}$-divisor by adjoining to it the vanishing current on $X_\sigma^{\an}$ for all~$\sigma$.
The obtained arithmetic $\mathbb{R}$-divisor is of Arakelov type and its explicit form is~$D$.
Hence, the abuse of notation introduced in Remark~\ref{canon_image_vert_arithm} is compatible with the identification given by Proposition~\ref{classical_ar}.
\end{rem}

We conclude this subsection with an application of Dirichlet's unit theorem, which shows a useful relation in~$\overline{\CH}^1_{\mathbb{R}}(X)$.

\begin{prop}\label{push_to_infty}
For any prime $\mathfrak p\neq(0)$ in~$B$ and any embedding~$\tau\in\Sigma$ there is a real number $c\neq0$ such that the Arakelov $\mathbb{R}$-divisor $X_{\mathfrak p}$ is equal to~$cX_\tau+cX_{\overline{\tau}}$ in~$\overline{\CH}^1_{\mathbb R}(X)$.
\end{prop}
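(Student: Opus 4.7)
The plan is to combine the finiteness of $\Pic(B)$ with Dirichlet's unit theorem, in the same spirit as Example~\ref{exa_spec_O_K}. Since $\Pic(B)$ is finite, I can pick $m\in\mathbb{N}_{\geq1}$ and $\xi\in O_K\setminus\{0\}$ with $\mathfrak{p}^m=(\xi)$. Remark~\ref{rem_div_const_expl} then gives $\widehat{\divi}(\xi)=(mX_{\mathfrak{p}},(-\log|\sigma(\xi)|^2)_\sigma)=0$ in $\widehat{\CH}^1_{\mathbb{R}}(X)$. Since the constants $\log|\sigma(\xi)|^2$ are trivially harmonic, this vanishing passes to $\overline{\CH}^1_{\mathbb{R}}(X)$ and, via the identification in Proposition~\ref{classical_ar}, yields the equality
\[
m X_{\mathfrak{p}}=\sum_{\sigma\in\Sigma}\log|\sigma(\xi)|^2\,X_\sigma\quad\text{in }\overline{\CH}^1_{\mathbb{R}}(X).
\]

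With the natural candidate $c:=\log\mathfrak{N}(\mathfrak{p})$ in mind, I would next consider the explicit Arakelov $\mathbb{R}$-divisor
\[
E:=\sum_{\sigma\in\Sigma}\log|\sigma(\xi)|^2\,X_\sigma-mc\,(X_\tau+X_{\overline{\tau}}).
\]
A short computation based on the identity $|N_{K/\mathbb{Q}}(\xi)|=\mathfrak{N}(\mathfrak{p})^m$ shows that $E$ belongs to the $F_\infty$-fixed subspace of $\bigoplus_\sigma\mathbb{R}X_\sigma$ and has coefficients summing to zero. These are precisely the hypotheses of Dirichlet's unit theorem (Theorem~\ref{dirichlet_unit_theorem}), which allows me to write the coefficient tuple of $E$ as a finite real linear combination of vectors of the form $(\log|\sigma(u)|^2)_\sigma$ with $u\in O_K^\times$. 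By Remark~\ref{rem_div_const_expl}, each such vector is (up to sign) the explicit form of the principal arithmetic divisor $\widehat{\divi}(u)$, whose geometric part is trivial. Hence $E$ is a real linear combination of principal Arakelov divisors and vanishes in $\overline{\CH}^1_{\mathbb{R}}(X)$.

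Assembling the two steps gives $mX_{\mathfrak{p}}=mc(X_\tau+X_{\overline{\tau}})$ in $\overline{\CH}^1_{\mathbb{R}}(X)$, and dividing by $m$ produces the claim with $c=\log\mathfrak{N}(\mathfrak{p})\neq 0$. The step I expect to require the most care is the identification between principal divisors of units and the archimedean part of an Arakelov divisor that makes Dirichlet's theorem bite; in particular, one should check that the ``geometric'' part of $\widehat{\divi}(u)$ really does vanish so that $\widehat{\divi}(u)$ embeds in the $F_\infty$-fixed summand via Proposition~\ref{classical_ar}. The distinction between real and complex embeddings for $\tau$ causes no trouble, since $X_\tau+X_{\overline{\tau}}$ is in either case a well-defined nonzero element of the $F_\infty$-fixed subspace and the computation of the coefficient sum is insensitive to that dichotomy.
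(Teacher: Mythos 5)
Your proof is correct and follows essentially the same route as the paper: take a power $\mathfrak{p}^m=(\xi)$, use Remark~\ref{rem_div_const_expl} to move $mX_{\mathfrak{p}}$ to the archimedean fibers, and then invoke Dirichlet's unit theorem to show that the difference from $mc(X_\tau+X_{\overline\tau})$ has coefficients summing to zero and hence vanishes in $\overline{\CH}^1_{\mathbb R}(X)$. A small bonus of your write-up is that you identify the constant explicitly as $c=\log\mathfrak N(\mathfrak p)$ (which is what the paper's $c=\frac{1}{2h}\sum_\sigma\log|\sigma(\xi)|^2$ equals), making the nonvanishing of $c$ immediate rather than via the product formula.
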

\begin{proof}
Denoting by $h$ the class number of~$K$, we have by definition that $\mathfrak{p}^h=(\xi)$ for a nonzero element~$\xi\in O_K\setminus O_K^\times$.
Hence, by Remark~\ref{rem_div_const_expl},
\begin{equation}\label{eq_div_const}
    \widehat{\divi}(\xi)= hX_{\mathfrak{p}} - \sum_{\sigma\in\Sigma}\log|\sigma(\xi)|^2X_\sigma\,.
\end{equation}
Set now
\[
c:=\frac{1}{2h}\,\sum_{\sigma\in\Sigma}\log|\sigma(\xi)|^2\,.
\]
Notice that~$c\neq0$, as the product formula would otherwise imply that~$\xi\in O_K^\times$.
Moreover, equality \eqref{eq_div_const} ensures that the difference $X_{\mathfrak{p}}-cX_\tau-cX_{\overline{\tau}}$ can be represented in~$\overline{\CH}_{\mathbb{R}}^1(X)$ by the Arakelov $\mathbb{R}$-divisor
\[
\frac{1}{h}\sum_{\sigma\in\Sigma}\log|\sigma(\xi)|^2X_\sigma-cX_\tau-cX_{\overline{\tau}}\,,
\]
which has coefficients summing to zero.
The claim follows hence from the fact that such Arakelov $\mathbb{R}$-divisor is a real combination of principal ones, which can be shown by using Dirichlet's unit theorem as in the last part of the proof of Theorem~\ref{hi_bo} or in Example~\ref{exa_spec_O_K}.
\end{proof}

\subsection{The arithmetic Shioda-Tate formula}

We prove in this subsection our arithmetic Shioda-Tate formula relating the dimension of the first Arakelov-Chow vector space with the Mordell-Weil rank of the Albanese variety of~$X_K$.
In doing this, we exhibit an explicit basis of~$\overline{\CH}^1_{\mathbb{R}}(X)$. Notice that, throughout all the proof, we will use the identification between an Arakelov $\mathbb{R}$-divisor of $X$ and its explicit form provided by Proposition~\ref{classical_ar}.

\vspace{\baselineskip}

Our first step is to introduce a distinguished subspace $\mathcal{S}$ of the first Arakelov-Chow space and to compute its dimension in terms of the geometry of~$X$.
To do this, consider the generic fiber $X_K$ of~$X$, which is by assumption a smooth and geometrically integral projective variety over~$K$.
Its \emph{N\'eron-Severi group} $\NS(X_K)$ is by definition the group of classes of divisors of $X_K$ modulo algebraic equivalence in the sense of \cite[A.9.39]{BG06}.
It follows then from the theorem of the base, see for instance \cite[XIII, Th{\'e}or{\`e}me~5.1]{SGA}, that $\NS(X_K)$ is an abelian group of finite rank, which we denote by~$r$ from now on.

By taking a set of independent generators for the free part of~$\NS(X_K)$, considering arbitrary preimages in $\Div(X_K)$ and finally taking the Zariski closure in~$X$, one obtains a family
\begin{equation}\label{def_family_W}
\{W_1,\ldots,W_r\}    
\end{equation}
of divisors on~$X$, that we suppose fixed for the remaining of the section.
Denote by $W_{1,K},\ldots,W_{r,K}$ their respective restrictions to the generic fiber of~$X$; by construction, they are divisors of $X_K$ whose classes in $\NS(X_K)\otimes\mathbb{R}$ are a basis of this real vector space.

Also, in view of Proposition~\ref{classical_ar}, the divisors $W_1,\ldots,W_r$ can be seen as Arakelov $\mathbb{R}$-divisors on~$X$.
They are not vertical, as their restriction to the generic fiber cannot vanish by construction.
Finally, denote by $\mathcal{S}$ the image of
\begin{equation}\label{def_of_S}
    \overline{\ver}_{\mathbb R}(X)+\bigoplus_{i=1}^r\mathbb{R}W_i  \subseteq\overline{Z}^1_{\mathbb{R}}(X)
\end{equation}
in the vector space~$\overline{\CH}^1_{\mathbb R}(X)$.

\begin{lem}\label{lemma-dim_of_S}
In the above notations, $\mathcal{S}$ is a finite dimensional vector subspace of~$\overline{\CH}^1_{\mathbb R}(X)$.
Moreover,
\[
\dim(\mathcal S)=\rk(\NS(X_K))+1+\sum_{\mathfrak p\neq(0)}(f_{\mathfrak p}-1)\,,
\]
where $f_{\mathfrak{p}}$ denotes the number of irreducible components of the fiber~$X_{\mathfrak{p}}$. 
\end{lem}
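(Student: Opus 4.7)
The plan is to exhibit a generating set of $\mathcal{S}$ of cardinality $r + 1 + \sum_{\mathfrak{p}}(f_{\mathfrak{p}} - 1)$, where $r = \rk(\NS(X_K))$, and then to prove that this set is linearly independent in $\overline{\CH}^1_{\mathbb{R}}(X)$.

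Using the identification from Proposition \ref{classical_ar} and the decomposition \eqref{eq-vert-decomp}, the vector space $V := \overline{\ver}_{\mathbb{R}}(X) + \bigoplus_i \mathbb{R} W_i$ is generated by the irreducible components $\Gamma_{\mathfrak{p},1}, \ldots, \Gamma_{\mathfrak{p}, f_{\mathfrak{p}}}$ of $X_{\mathfrak{p}}$ (as $\mathfrak{p}$ ranges over nonzero primes), the $F_\infty$-invariant symbols $X_\sigma$, and the horizontal divisors $W_1, \ldots, W_r$. I prune this list by invoking two sources of rational equivalences. First, by Proposition \ref{push_to_infty}, fixing $\tau \in \Sigma$ and writing $X_{\mathfrak{p}} = \sum_j a_j \Gamma_{\mathfrak{p}, j}$ with $a_1 > 0$, the component $\Gamma_{\mathfrak{p}, 1}$ can be expressed in $\overline{\CH}^1_{\mathbb{R}}(X)$ as a combination of $\Gamma_{\mathfrak{p}, 2}, \ldots, \Gamma_{\mathfrak{p}, f_{\mathfrak{p}}}$ and $X_\tau + X_{\overline{\tau}}$. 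Second, since $\widehat{\divi}(\xi) = -\sum_\sigma \log|\sigma(\xi)|^2 X_\sigma$ for every $\xi \in O_K^\times$, Dirichlet's unit theorem (in the form of Theorem \ref{dirichlet_unit_theorem}) collapses the $F_\infty$-invariant archimedean part of $V$ onto a one-dimensional quotient recorded by $\sum_\sigma c_\sigma$. Together these manipulations produce the candidate spanning set
\[
\{W_1, \ldots, W_r\} \cup \{X_{\tau_0}\} \cup \bigcup_{\mathfrak{p}}\{\Gamma_{\mathfrak{p}, 2}, \ldots, \Gamma_{\mathfrak{p}, f_{\mathfrak{p}}}\}
\]
for a fixed $\tau_0 \in \Sigma$, of the required finite cardinality (finiteness follows from $f_{\mathfrak{p}} = 1$ for all but finitely many primes).

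For linear independence, assume that $\sum_i b_i W_i + c X_{\tau_0} + \sum_{\mathfrak{p}, j \geq 2}\alpha_{\mathfrak{p}, j}\Gamma_{\mathfrak{p}, j} = 0$ in $\overline{\CH}^1_{\mathbb{R}}(X)$. The composition of restriction to $X_K$ with the class in N\'eron-Severi descends to a well-defined map $\rho \colon \overline{\CH}^1_{\mathbb{R}}(X) \to \NS(X_K) \otimes \mathbb{R}$ (principal divisors on $X$ restrict to principal, hence algebraically trivial, divisors on $X_K$, while vertical and archimedean generators restrict to zero), under which the $W_i$ are sent to a basis of the image; applying $\rho$ forces $b_i = 0$ for every $i$. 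Fix now any $\mathfrak{p}_0$ contributing nontrivially and pair the surviving relation with $E \cdot \widehat{A}^{d-1}$ for an arbitrary $E \in Z^1_{\mathbb{R}}(X)_{\mathfrak{p}_0}$: the contribution of $cX_{\tau_0}$ vanishes since $X_{\tau_0} \cdot E = 0$ already in $\widehat{\CH}^2_{\mathbb{R}}(X)$ (a direct computation with the $*$-product using $E_\sigma = 0$ for vertical $E$), and the contributions of fibers other than $\mathfrak{p}_0$ vanish by Remark \ref{intersect_vert_fib}; what remains is $\langle\sum_{j \geq 2}\alpha_{\mathfrak{p}_0, j}\Gamma_{\mathfrak{p}_0, j}, E\rangle_{\mathfrak{p}_0, \widehat{A}} = 0$ for all $E$. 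Proposition \ref{hi_li} then forces $\sum_{j \geq 2}\alpha_{\mathfrak{p}_0, j}\Gamma_{\mathfrak{p}_0, j}$ to be a real multiple of $X_{\mathfrak{p}_0} = \sum_j a_j \Gamma_{\mathfrak{p}_0, j}$; comparing the coefficients of $\Gamma_{\mathfrak{p}_0, 1}$ (zero on one side, $a_1 > 0$ times the scalar on the other) gives $\alpha_{\mathfrak{p}_0, j} = 0$ for every $j$. Finally, the leftover relation $c X_{\tau_0} = 0$ is killed by pairing with a codimension-$d$ arithmetic $\mathbb{R}$-cycle $\widehat{C}$ satisfying $\deg(C_K) \neq 0$ (for instance the Zariski closure of a closed point of $X_K$, equipped with any Green current): the explicit formula used at the end of the proof of Theorem \ref{hi_bo} yields $\langle cX_{\tau_0}, \widehat{C}\rangle = \frac{\deg(C_K)}{2}\sum_\sigma c_\sigma$, a nonzero multiple of $c$, so $c = 0$.

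The main obstacle is to disentangle the vertical and archimedean contributions, which are coupled through Proposition \ref{push_to_infty}: that statement absorbs each $X_{\mathfrak{p}}$ into the archimedean part, reducing the number of independent fiber classes to $f_{\mathfrak{p}} - 1$, while Dirichlet's unit theorem simultaneously reduces the archimedean contribution to a single dimension. The delicate point is to verify that these two reductions do not introduce hidden relations mixing horizontal, vertical, and archimedean generators; this is ensured by combining the Hodge-type statement of Proposition \ref{hi_li}, the vanishing $X_\sigma \cdot E = 0$ for vertical $E$, and the nontriviality of the archimedean pairing against a cycle of positive generic degree.
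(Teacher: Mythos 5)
Your proof is correct and follows essentially the same strategy as the paper's: reduce the archimedean part via Dirichlet, absorb one fiber component per prime via Proposition~\ref{push_to_infty}, and verify linear independence using the Hodge-type Proposition~\ref{hi_li} together with a pairing argument for the remaining archimedean coefficient. Your presentation differs only superficially: you kill the $W_i$-coefficients in one pass by introducing the well-defined restriction map~$\rho\colon\overline{\CH}^1_{\mathbb R}(X)\to\NS(X_K)\otimes\mathbb R$, whereas the paper first establishes a direct-sum decomposition $\mathcal S=\Ima(\bigoplus\mathbb{R}W_i)\oplus\Ima(\overline{\ver}_{\mathbb R}(X))$ and then computes the vertical summand's dimension; and you apply Proposition~\ref{hi_li} by pairing the relation against an arbitrary $E\in Z^1_{\mathbb R}(X)_{\mathfrak p_0}$ and specializing to $E=W_0$, whereas the paper gets $\langle W,W\rangle_{\mathfrak q,\widehat A}=0$ directly from $\overline Z\cdot W=0$.

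One small slip deserves attention. Your spanning set contains the single symbol~$X_{\tau_0}$ for a fixed $\tau_0\in\Sigma$, and you also pair $cX_{\tau_0}$ against a cycle of nonzero generic degree at the end. But $X_{\tau_0}$ alone lies in $\left(\bigoplus_\sigma\mathbb R X_\sigma\right)^{F_\infty}$ only when $\tau_0$ is a \emph{real} embedding; for complex $\tau_0$ the element is not $F_\infty$-invariant, hence not an Arakelov $\mathbb R$-divisor, and its class in $\overline{\CH}^1_{\mathbb R}(X)$ is undefined. Since some number fields (e.g.\ imaginary quadratic ones) have no real embeddings, you should use $X_{\tau_0}+X_{\overline{\tau_0}}$ throughout, as the paper does. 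This is the only place where you must be careful; with that replacement every step you write (including the vanishing of the intersection with a vertical $E$ and the final degree pairing) goes through verbatim.
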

\begin{proof}
Observe first that the images of $\overline{\ver}_{\mathbb R}(X)$ and $\bigoplus\mathbb{R}W_i$ in the first Arakelov-Chow space of $X$ are in direct sum.
Indeed, consider an element in the intersection of the image of $\bigoplus\mathbb{R}W_i$ and the image of a vertical Arakelov~$\mathbb{R}$-divisor.
It is the class of an Arakelov $\mathbb{R}$-divisor~$\sum_i\alpha_iW_i$ for which there is $\sum_{j=1}^k \lambda_j\widehat{\divi}(f_j)\in\widehat{\Rat}^1_{\mathbb R}(X)$ such that
\[
\sum_{i=1}^r\alpha_iW_i+\sum_{j=1}^k \lambda_j\widehat{\divi}(f_j)
\]
is vertical.
By definition, this implies that on the generic fibre $X_K$
\[
\sum_{i=1}^r\alpha_iW_{i,K}+\sum_{j=1}^k\lambda_j\divi(f_j)=0\,,
\]
and in turn that $\sum\alpha_iW_{i,K}=0$ in~$\NS(X_K)\otimes\mathbb{R}$.
The choice of $W_1,\ldots,W_r$ allows to conclude that~$\alpha_1=\ldots=\alpha_r=0$.

The same argument shows in fact that the images of $W_1,\ldots,W_r$ in $\overline{\CH}^1_{\mathbb R}(X)$ are linearly independent; hence, $\mathcal{S}$ is finite dimensional if the image of $\overline{\ver}_{\mathbb R}(X)$ is so, in which case
\begin{equation}\label{eq_dimS}
\dim(\mathcal{S})=\rk(\NS(X_K))+\dim(\Ima(\overline{\ver}_{\mathbb R}(X)))\,.
\end{equation}

So, in order to prove the lemma, it is enough to show that the second summand in the right hand side of \eqref{eq_dimS} is finite and that it equals~$1+\sum_{\mathfrak p}(f_{\mathfrak p}-1)$.
To do this, we exhibit an explicit basis of the involved vector space.

For all~$\mathfrak{p}\neq(0)$, denote by $\Gamma_1^{\mathfrak{p}},\ldots,\Gamma_{f_\mathfrak{p}}^{\mathfrak{p}}$ the irreducible components of the fiber of $X$ over~$\mathfrak{p}$.
Recalling~\eqref{eq-vert-decomp}, we have the decomposition
\[
\overline{\ver}_{\mathbb R}(X)
=
\bigoplus_{\mathfrak{p}\neq(0)}\left(\bigoplus_{i=1}^{f_{\mathfrak{p}}}\mathbb{R}\Gamma_i^{\mathfrak{p}}\right)\oplus\left(\bigoplus_{\sigma\in\Sigma}\mathbb R X_\sigma\right)^{F_\infty}\,.
\]
Fix an embedding~$\tau\in\Sigma$.
We claim that the image $\mathscr{B}$ in $\overline{\CH}^1_{\mathbb R}(X)$ of the family
\begin{equation}\label{basis_vert}
    \left\{X_\tau+ X_{\overline{\tau}}\right\}\,\cup\,\bigcup_{\mathfrak{p}\neq(0)}\left\{\Gamma^{\mathfrak p}_2,\Gamma^{\mathfrak p}_3,\ldots\Gamma^{\mathfrak p}_{f\mathfrak p}\right\}
\end{equation}
is a basis for~$\Ima(\overline{\ver}_{\mathbb R}(X))$.

Let us first show that $\mathscr{B}$ is a set of generators.
As $\Ima(\overline{\ver}_{\mathbb R}(X))$ is clearly generated by the images of the irreducible components of the finite fibers and by the images of the fibers at infinity, it is enough to show that all these elements can be expressed in~$\overline{\CH}^1_{\mathbb R}(X)$ as linear combinations of the ones appearing in~\eqref{basis_vert}.
First, for all~$\mathfrak{p}\neq(0)$, Proposition~\ref{push_to_infty} implies that in the first Arakelov-Chow space one has
\[
\sum_{i=1}^{f_\mathfrak{p}}m_i\Gamma_i^{\mathfrak{p}_i}=X_\mathfrak{p}=c(X_\tau+X_{\overline{\tau}})
\]
for some~$m_1,\ldots,m_{f_\mathfrak{p}}\in\mathbb{N}_{\geq1}$, which implies that $\Gamma_1^{\mathfrak{p}}$ belongs to the linear span of~$\mathscr{B}$.
Regarding the infinite fibers, for all $\sigma\in\Sigma$ one has that $X_\sigma+X_{\overline{\sigma}}$ is equal to $X_\tau+X_{\overline{\tau}}$ in $\overline{\CH}^1_{\mathbb R}(X)$ because of Dirichlet's unit theorem.

Secondly, let us prove that the elements of $\mathscr{B}$ are linearly independent.
Suppose by contradiction that there is a nontrivial linear relation
\[
a_0(X_\tau+X_{\overline{\tau}})+\sum_{\mathfrak{p}\neq(0)}\sum_{i=2}^{f_{\mathfrak{p}}}a_{\mathfrak{p},i}\Gamma_i^{\mathfrak{p}}=0
\]
in~$\overline{\CH}^{1}_{\mathbb R}(X)$.
Since $X_\tau+X_{\overline{\tau}}$ is not zero in the first Arakelov-Chow space (for instance, it is not numerically trivial), there exists $\mathfrak{q}\neq(0)$ such that at least one of the real numbers $a_{\mathfrak{q},2},\ldots,a_{\mathfrak{q},f_{\mathfrak{q}}}$ is nonzero.
Set
\[
W:=-\sum_{i=2}^{f_{\mathfrak{q}}}a_{\mathfrak{q},i}\Gamma_i^{\mathfrak{q}},
\qquad\text{and}\qquad
\overline{Z}:=a_0(X_\tau+X_{\overline{\tau}})+\sum_{\mathfrak{p}\neq\mathfrak{q}}\sum_{i=2}^{f_{\mathfrak{p}}}a_{\mathfrak{p},i}\Gamma_i^{\mathfrak{p}}\,.
\]
Choose an ample arithmetic divisor $\widehat{A}$ on~$X$.
Since $\overline{Z}$ and $W$ are numerically equivalent, we deduce from the associativity of the arithmetic intersection product and from Remark~\ref{intersect_vert_fib} that
\[
\left<W,W\right>_{\mathfrak  q,\widehat {A}}
=
\left< W,W\cdot\widehat{A}^{d-1}\right>
=
\left< \overline{Z},W\cdot\widehat{A}^{d-1}\right>
=
\left< \overline{Z}\cdot W,\widehat{A}^{d-1}\right>
=
0\,.
\]
Then, by Proposition~\ref{hi_li} it must happen that $W$ is a real multiple of~$X_\mathfrak{q}$.
Since $\Gamma_1^{\mathfrak{q}}$ does not appear in the support of~$W$, while at least one of the other irreducible components does by the assumption on~$\mathfrak{q}$, we get a contradiction.

To conclude, observe that the geometric irreducibility of $X_K$ implies that $f_{\mathfrak{p}}=1$ for all but finitely many~$\mathfrak{p}\in B$, see for instance~\cite[Proposition~9.7.8]{EGA-IV}.
Hence, the family $\mathscr{B}$ is finite.
Since the images in the first Arakelov-Chow space of the elements in \eqref{basis_vert} are different (we have in fact proved that they are linearly independent), we infer that the cardinality of $\mathscr{B}$ is~$1+\sum_{\mathfrak p}(f_{\mathfrak p}-1)$, concluding the proof.
\end{proof}

Before stating our final result, we recall that $\Pic^0(X_K)$ is defined as the quotient of divisors algebraically equivalent to zero on $X_K$ by principal divisors.

We now recall some geometric constructions allowing to associate two abelian varieties defined over $K$ to the generic fiber $X_K$ of~$X$.
First, it would be desirable to have an abelian variety over $K$ whose $F$-points agree with~$\Pic^0(X_F)$ for all field extensions $F$ of~$K$.
This holds true when~$X_K(K)\neq\emptyset$, see \cite[Corollary~8.4.10]{BG06}; in the general case, the situation is more complicated.

A crucial observation is that, even if $X_K$ has not a $K$-rational point, the structure morphism~$X_K\to\spec K$ always admits a section``locally for the fppf topology on~$\spec K$", suggesting that an abelian variety could solve the moduli problem on such a site.
In fact, \cite[Th\'eor\`eme~3.1]{Gro62a} shows that under our assumptions on $X_K$ there exists a scheme $\bPic(X_K)$ which represents the sheaf associated to the relative Picard functor in the fppf topology, see also \cite{Kle05} for a modern presentation and for the precise definitions.
However, even if it happens that $\Pic(X_K)$ embeds naturally into~$\bPic(X_K)(K)$, such an embedding is not necessarily an isomorphism, see \cite[Theorem~9.2.5, Exercise~9.3.11 and Exercise~9.2.4]{Kle05}.

More interestingly for our treatment, by \cite[Corollaire~3.2 and Th\'eor\`eme~3.3.(i)]{Gro62b} there exists a unique abelian subscheme $\bPic^0(X_K)$ of $\bPic(X_K)$ whose underlying set agrees with the connected component of the identity.
It turns out that $\bPic^0(X_K)$ is a separated geometrically irreducible projective group subscheme of finite type of $\bPic(X_K)$ \cite[Proposition~5.3 and Theorem~5.4]{Kle05}.
Moreover, by \cite{Oor66} its base change to~$\overline{K}$, which is a group scheme over a field of characteristic zero, is reduced.
Therefore, $\bPic^0(X_K)$ is also geometrically reduced and hence it is an abelian variety, which is classically referred to as the \emph{Picard variety} of~$X_K$.

The $K$-points of the Picard variety of $X_K$ form a group which is not too far from the actual~$\Pic^0(X_K)$, as the next lemma shows.

\begin{lem}\label{2tor}
In the above assumptions and notations, $\Pic^0(X_K)$ is a subgroup of~$\bPic^0(X_K)(K)$, and the quotient
\[
\bPic^0(X_K)(K)/\Pic^0(X_K)
\]
is a torsion group.
\end{lem}
\begin{proof}
First of all, by seeing $\Pic(X_K)$ in~$\bPic(X_K)(K)$, \cite[Proposition~9.5.10]{Kle05} affirms that~$\Pic^0(X_K)=\bPic^0(X_K)(K)\cap\Pic(X_K)$.
In particular, the claimed subgroup relation holds, and there is an embedding
\[
\bPic^0(X_K)(K)/\Pic^0(X_K)\hookrightarrow\bPic(X_K)(K)/\Pic(X_K)\,.
\]
Then, by \cite[Corollaire~5.3]{Gro66} we have an exact sequence
\[
0\to\Pic(X_K)\to\bPic(X_K)(K)\to\Br(K)\,,
\]
which identifies $\bPic(X_K)(K)/\Pic(X_K)$ with a subgroup of the Brauer group $\Br(K)$ of~$K$. 
Now, since $K$ is a number field, $\Br(K)$ can be embedded in $\bigoplus_{v} \Br(K_{v})$, where $v$ ranges over all places, and the latter is a torsion group, see \cite[Corollaire~2.3 and pages 94-95]{Gro66} to check these claims. So 
$\bPic^0(X_K)(K)/\Pic^0(X_K)$ is a torsion group as well.
\end{proof}

With the previous in mind, the \emph{Albanese variety} $\alb(X_K)$ of $X_K$ is by definition the dual abelian variety of~$\bPic^0(X_K)$, and it satisfies a universal property as in~\cite[Th\'eor\`eme~3.3.(iii)]{Gro62b}.

We can finally state the main result of this section.

\begin{thm}[arithmetic Shioda-Tate formula]\label{hi_ST}
Let $X$ be a projective arithmetic variety over the ring of integers of a number field~$K$.
The first Arakelov-Chow vector space of $X$ is finite dimensional and:
\[
\dim(\overline{\CH}^1_{\mathbb R}(X))=\rk(\alb(X_K)(K))+\rk(\NS(X_K))+1+\sum_{\mathfrak p\neq (0)} (f_{\mathfrak p}-1)\,,
\]
where $f_\mathfrak{p}$ stands for the number of irreducible components of the fiber of $X$ over~$\mathfrak{p}$, whereas $\alb(X_K)$ and $\NS(X_K)$ denote respectively the Albanese variety and the N\'eron-Severi group of the generic fiber~$X_K$. 
\end{thm}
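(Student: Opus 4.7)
The plan is to set up a short exact sequence that extracts the contribution of the generic fibre by restriction, and to reduce the statement to a combination of (i) the dimension count for vertical Arakelov $\mathbb R$-divisors already established during the proof of Lemma~\ref{lemma-dim_of_S} and (ii) a Mordell--Weil type computation for the Picard group of $X_K$.

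More precisely, I would consider the restriction map
\[
\psi\colon \overline{\CH}^1_{\mathbb R}(X)\longrightarrow \Pic(X_K)\otimes_{\mathbb Z}\mathbb R,\qquad [(D,(g_\sigma)_\sigma)]\longmapsto [D_K]\,,
\]
and check that it is well defined on classes: principal arithmetic divisors $\widehat{\divi}(f)$ with $f\in K(X)^\times$ restrict to the principal divisor $\divi(f_K)$ on $X_K$; those of the shape $\widehat{\divi}(\xi)$ with $\xi\in K^\times$ are vertical by Remark~\ref{rem_div_const_expl}, hence restrict to zero; and the purely analytic generators of $\widehat{\Rat}^1_{\mathbb R}(X)$ have vanishing geometric part. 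Surjectivity follows by writing any class $[D_K]\in\Pic(X_K)\otimes\mathbb R$ as $\sum\lambda_i[D_{i,K}]$ via Lemma~\ref{alg_lem}(1), taking Zariski closures $D_i$ in $X$ to produce an $\mathbb R$-divisor $D$ restricting to $D_K$, and then endowing $D$ with Green currents whose curvature is harmonic with respect to $\Omega_\sigma$, an adjustment always possible by Hodge decomposition and \cite[3.3.5 Theorem,~(ii)]{GS90a}. For the kernel, if $D_K=\divi(f_K)$ for some $f_K\in K(X_K)^\times=K(X)^\times$, subtracting $\widehat{\divi}(f)$ yields a representative with vertical geometric part, so $\ker\psi=\Ima(\overline{\ver}_{\mathbb R}(X))$ inside $\overline{\CH}^1_{\mathbb R}(X)$, giving the exact sequence
\[
0\longrightarrow \Ima(\overline{\ver}_{\mathbb R}(X))\longrightarrow \overline{\CH}^1_{\mathbb R}(X)\stackrel{\psi}{\longrightarrow} \Pic(X_K)\otimes_{\mathbb Z}\mathbb R\longrightarrow 0\,.
\]

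The intermediate computation inside the proof of Lemma~\ref{lemma-dim_of_S} already yields $\dim\Ima(\overline{\ver}_{\mathbb R}(X))=1+\sum_{\mathfrak p\neq(0)}(f_{\mathfrak p}-1)$. For the right-hand term, I would tensor the standard short exact sequence
\[
0\longrightarrow \Pic^0(X_K)\longrightarrow \Pic(X_K)\longrightarrow \NS(X_K)\longrightarrow 0
\]
with $\mathbb R$: the outer terms are finite-dimensional, the first by Mordell--Weil applied to the Picard variety of $X_K$ and the second by the theorem of the base, so $\dim(\Pic(X_K)\otimes\mathbb R)=\rk(\Pic^0(X_K))+\rk(\NS(X_K))$. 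Since $\alb(X_K)$ is projective, it admits a $K$-polarization, which produces a $K$-isogeny between $\alb(X_K)$ and its dual; as the dual is the Picard variety, one obtains $\rk(\Pic^0(X_K))=\rk(\alb(X_K)(K))$, and assembling the dimension counts concludes the proof.

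The main technical obstacle I foresee lies in the surjectivity step, which requires producing Arakelov lifts of arbitrary real divisor classes on $X_K$: after the reduction to finitely many integral divisors via Lemma~\ref{alg_lem}, one needs the existence of Green currents with harmonic curvature for any divisor, a consequence of Hodge theory on compact K\"ahler manifolds. A secondary delicate point is the identification $\rk(\Pic^0(X_K))=\rk(\alb(X_K)(K))$: dual abelian varieties over a number field need not be $K$-isomorphic, so the equality of Mordell--Weil ranks must be deduced from the $K$-isogeny provided by a polarization on $\alb(X_K)$.
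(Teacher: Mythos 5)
Your proof is correct and its skeleton is the same as the paper's (a short exact sequence whose kernel captures the vertical contribution, whose cokernel captures a Mordell--Weil rank, together with the isogeny between the Albanese and Picard varieties over $K$), but you organize the bookkeeping of the N\'eron--Severi contribution differently, and in a way that is arguably a bit cleaner. The paper fixes a family $\{W_1,\ldots,W_r\}$ of divisors on $X$ whose generic fibres give a basis of $\NS(X_K)\otimes\mathbb{R}$, forms the auxiliary subspace $\mathcal{S}=\Ima\bigl(\overline{\ver}_{\mathbb R}(X)+\bigoplus_i\mathbb{R}W_i\bigr)$, and defines its map $\psi\colon\overline{\CH}^1_{\mathbb R}(X)\to\Pic^0(X_K)\otimes\mathbb{R}$ by $\overline D\mapsto D_K-\sum_ia_i(D)W_{i,K}$, so that $\ker\psi=\mathcal{S}$; the $\NS$ term then enters through $\dim\mathcal{S}=\rk\NS(X_K)+\dim\Ima(\overline{\ver}_{\mathbb R}(X))$ established in Lemma~\ref{lemma-dim_of_S}. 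You instead map directly to $\Pic(X_K)\otimes\mathbb{R}$ by restriction, obtain the smaller kernel $\Ima(\overline{\ver}_{\mathbb R}(X))$ (whose dimension $1+\sum_{\mathfrak p}(f_{\mathfrak p}-1)$ is exactly the intermediate computation inside the proof of Lemma~\ref{lemma-dim_of_S}), and push the $\NS$ contribution to the cokernel by tensoring $0\to\Pic^0(X_K)\to\Pic(X_K)\to\NS(X_K)\to0$ with~$\mathbb{R}$. This avoids both the choice of the $W_i$ and the subspace~$\mathcal{S}$, at the modest cost of invoking that exact sequence. All the ingredients you use --- the Zariski-closure lift, the Hodge-theoretic adjustment to get a harmonic curvature representative, the finite rank of $\NS$ by the theorem of the base, and the $K$-isogeny $\alb(X_K)\to Pic^0(X_K)$ coming from a $K$-polarization --- are sound, and your worry about the surjectivity step is resolved exactly as you anticipate. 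Two trivial phrasing fixes: in the kernel step, $\psi(\overline D)=0$ means $D_K$ is an $\mathbb{R}$-linear combination of principal divisors, not a single $\divi(f_K)$ (subtract the corresponding $\mathbb{R}$-combination of $\widehat{\divi}(f_j)$); and after the Hodge-theoretic modification of the Green currents one should symmetrize to preserve the real-type condition $F_\sigma^\ast g_\sigma=g_{\bar\sigma}$, but this is routine.
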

\begin{proof}
Consider the real vector subspaces of $Z_{\mathbb{R}}^1(X_K)$ generated by algebraically trivial divisors and by principal divisors on~$X_K$ respectively, and denote by $\Pic_{\mathbb{R}}^0(X_K)$ their quotient.
Notice that, as $\mathbb{R}$ is a flat~$\mathbb{Z}$-module, one has that~$\Pic_{\mathbb{R}}^0(X_K)\simeq\Pic^0(X_K)\otimes\mathbb{R}$.
Also, recall that in \eqref{def_family_W} we have fixed a family $\{W_1,\ldots,W_r\}$ of divisors on $X$ inducing a basis of~$\NS(X_K)\otimes\mathbb{R}$. 
The core of the proof is to define and study the linear map
\begin{eqnarray*}
\psi\colon\overline{\CH}^1_{\mathbb R}(X)&\longrightarrow& \Pic_{\mathbb R}^0(X_K)\\
\overline D &\longmapsto & D_K-\sum^r_{i=1} a_i(D)W_{i,K}\,,
\end{eqnarray*}
where $a_1(D),\ldots,a_r(D)$ are the unique real coefficients needed to express the class of $D_K$ in $\NS(X_K)\otimes\mathbb{R}$ in terms of the basis determined by~$\{W_{1,K},\ldots,W_{r,K}\}$.
By construction, the image $\psi(\overline{D})$ is an element of $\Pic_{\mathbb R}^0(X_K)$.
Moreover, if $\overline{D}$ is $0$ in $\overline{\CH}^1_{\mathbb R}(X)$, we have that $D_K$ is a linear real combination of principal divisors on~$X_K$, so $\psi$ is well-defined.
Observe that the map $\psi$ is surjective; indeed, for an element in~$\Pic_{\mathbb{R}}^0(X_K)$ it is enough to consider its Zariski closure in~$X$ to obtain a preimage.
We now show that~$\ker{\psi}=\mathcal S$, where $\mathcal{S}$ is the image of the subspace \eqref{def_of_S} in~$\overline{\CH}^1_{\mathbb{R}}(X)$.
The inclusion $\ker{\psi}\supseteq \mathcal S$ is clear from the definitions.
Consider now~$\overline D\in \ker{\psi}$; then there exist $f_1,\ldots,f_k\in K(X_K)^\times$  such that
\[
D_K-\sum^r_{i=1} a_i(D)W_{i,K}=\sum_{j=1}^k\lambda_j\divi(f_j)
\]
for some real numbers~$\lambda_1,\ldots,\lambda_k$.
Seeing $f_1,\ldots,f_k$ as nonzero rational functions on~$X$, we have that the Arakelov $\mathbb{R}$-divisor
\[
\sum_{j=1}^k\lambda_j\widehat{\divi}(f_j)-\overline D+\sum^r_{i=1} a_i(D)W_i
\]
has trivial generic fiber, which in turn implies that it is vertical.
Therefore~$\overline D\in S$.

We have shown that $\psi$ induces the following short exact sequence of vector spaces:
\[
0\longrightarrow\mathcal S\longrightarrow \overline{\CH}^1_{\mathbb R}(X)\longrightarrow  \Pic_{\mathbb{R}}^0(X_K)\longrightarrow 0\,.
\]
The real dimension of the space $\Pic_{\mathbb{R}}^0(X_K)$ coincides with the rank of the group $\Pic^0(X_K)$ and hence with the Mordell-Weil rank of the $K$-variety~$\bPic^0(X_K)$ thanks to Lemma~\ref{2tor}.
Since an abelian variety and its dual are always isogenous over~$K$ (see for instance \cite[Theorem~8.5.1 and the proof of Corollary~8.5.11]{BG06}) and the Mordell-Weil rank of an abelian variety is finite and invariant under isogenies over~$K$, we have that~$\rk(\bPic^0(X_K)(K))=\rk(\alb(X_K)(K))$ is finite.
Therefore the statement follows from Lemma~\ref{lemma-dim_of_S}.
\end{proof}

\begin{rem}
Let $s$ denote the rank of the Mordell-Weil group of the Picard variety of~$X_K$, and let $D_1,\ldots,D_s$ be a family of algebraically trivial divisors on $X_K$ whose classes generate the free part of~$\Pic^0(X_K)$.
By the proof of Theorem~\ref{hi_ST} and Lemma~\ref{lemma-dim_of_S}, an explicit basis of the first Arakelov-Chow space of~$X$ is given by the following Arakelov~$\mathbb{R}$-divisors, written in their explicit form: the Zariski closures of $D_1,\ldots,D_s$ in~$X$, the divisors $W_1,\ldots,W_r$ of~\eqref{def_family_W}, and the elements in~\eqref{basis_vert}.
\end{rem}

\begin{rem}
The dimension of the first Arakelov-Chow space calculated in Theorem~\ref{hi_ST} is independent of the initial choice of the K\"ahler forms on each~$X_\sigma^{\an}$.
This is certainly not surprising because of \cite[Theorem in~5.1.6, entry~(i)]{GS90a}.
\end{rem}

We conclude the paper by presenting some instances of the arithmetic Shioda-Tate formula proved in Theorem~\ref{hi_ST}.

\begin{exa}[low dimensions]
When~$X=\spec O_K$, the Picard group of $X_K$ is trivial, hence both groups $\NS(X_K)$ and $\alb(X_K)(K)$ are so.
Moreover, all special fibers of $X$ are irreducible as $\mathfrak{p}$ is prime, hence Theorem~\ref{hi_ST} affirms that
\[
\dim\left(\overline{\CH}^1_{\mathbb R}(\spec O_K)\right)=1\,,
\]
which agrees with the direct calculation of Example~\ref{exa_ar_spec_O_K}.

When $X$ has relative dimension $1$ over~$\spec O_K$, the generic fiber $X_K$ is a curve over $K$ and $\alb(X_K)$ is by definition its Jacobian.
Noticing that $\NS(X_K)$ has rank $1$ since $X_K$ is a curve, the statement of Theorem~\ref{hi_ST} is in this case a direct consequence of~\cite[Theorem~4.(d)]{Fal84}.
\end{exa}

\begin{exa}[projective spaces]
Let $X=\mathbb{P}_{O_K}^d$ be the projective space over $\spec O_K$ of relative dimension~$d$.
All its fibers are irreducible since they  are projective spaces over a finite field, and the generic fiber is~$\mathbb{P}^d_K$.
Remember that $\Pic(\mathbb{P}^d_K)\simeq \mathbb Z$ and each element of  $\Pic(\mathbb{P}^d_K)$ can be identified with the isomorphism class of the sheaf $\mathscr{O}_{\mathbb {P}^d_K}(m)$ for~$m\in\mathbb Z$.
It follows that the numerically trivial divisors of $\Pic(\mathbb{P}^d_K)$ are exactly the principal divisors; therefore $\Pic^0(\mathbb{P}^d_K)=\{0\}$ and we can conclude that $\NS(\mathbb{P}^d_K)\simeq\mathbb{Z}$ whereas $\alb(\mathbb{P}^d_K)$ is trivial.
By Theorem \ref{hi_ST} we deduce that $\overline{\CH}^1_{\mathbb R}(\mathbb P^d_{O_K})$ has dimension~$2$, as expected.
\end{exa}

\begin{exa}[abelian varieties]
Let $A$ be a regular integral projective model of an abelian variety defined over the number field~$K$.
Then, Theorem \ref{hi_ST} reads
\[
\dim(\overline{\CH}^1_{\mathbb R}(A))
=
\rk(A_K(K))+\rk(\NS(A_K))+1+\sum_{\mathfrak p\neq (0)} (f_{\mathfrak p}-1)\,,
\]
giving a relation between the dimension of the first Arakelov-Chow space of $A$ and the Mordell-Weil rank of its generic fiber.
\end{exa}

\bibliographystyle{my_style}
\bibliography{biblio}

\begin{thebibliography}{SABK94}

\bibitem[Ara74]{Ara74}
S.~J. Arakelov.
\newblock Intersection theory of divisors on an arithmetic surface.
\newblock {\em Mathematics of the USSR-Izvestiya}, 8(6):1167, 1974.

\bibitem[BC09]{BC09}
J.-B. Bost and A.~{Chambert-Loir}.
\newblock Analytic curves in algebraic varieties over number fields.
\newblock In {\em Algebra, arithmetic, and geometry: in honor of {Y}u. {I}.
  {M}anin. {V}ol. {I}}, volume 269 of {\em Progr. Math.}, pages 69--124.
  Birkh\"{a}user Boston, Boston, MA, 2009.

\bibitem[BG06]{BG06}
E.~Bombieri and W.~Gubler.
\newblock {\em Heights in {D}iophantine geometry}, volume~4 of {\em New
  Mathematical Monographs}.
\newblock Cambridge University Press, Cambridge, 2006.

\bibitem[BGS94]{BGS94}
J.-B. Bost, H.~Gillet, and C.~Soul\'e.
\newblock Heights of projective varieties and positive {G}reen forms.
\newblock {\em J. Amer. Math. Soc.}, 7(4):903--1027, 1994.

\bibitem[CD20]{CD20}
W.~Czerniawska and P.~Dolce.
\newblock Adelic geometry on arithmetic surfaces {II}: {C}ompleted adeles and
  idelic {A}rakelov intersection theory.
\newblock {\em J. Number Theory}, 211:235--296, 2020.

\bibitem[Cha17]{Cha17}
F.~Charles.
\newblock Arithmetic ampleness and an arithmetic {B}ertini theorem.
\newblock \url{https://arxiv.org/abs/1703.02481}, 2017.

\bibitem[deR84]{deR84}
G.~de~Rham.
\newblock {\em Differentiable manifolds}, volume 266 of {\em Grundlehren der
  Mathematischen Wissenschaften [Fundamental Principles of Mathematical
  Sciences]}.
\newblock Springer-Verlag, Berlin, 1984.
\newblock Forms, currents, harmonic forms, Translated from the French by F. R.
  Smith, With an introduction by S. S. Chern.

\bibitem[Fal83]{Fal83}
G.~Faltings.
\newblock Endlichkeitss\"{a}tze f\"{u}r abelsche {V}ariet\"{a}ten \"{u}ber
  {Z}ahlk\"{o}rpern.
\newblock {\em Invent. Math.}, 73(3):349--366, 1983.

\bibitem[Fal84]{Fal84}
G.~Faltings.
\newblock Calculus on arithmetic surfaces.
\newblock {\em Ann. of Math. (2)}, 119(2):387--424, 1984.

\bibitem[Ful98]{Ful98}
W.~Fulton.
\newblock {\em Intersection theory}, volume~2 of {\em Ergebnisse der Mathematik
  und ihrer Grenzgebiete. 3. Folge. A Series of Modern Surveys in Mathematics
  [Results in Mathematics and Related Areas. 3rd Series. A Series of Modern
  Surveys in Mathematics]}.
\newblock Springer-Verlag, Berlin, second edition, 1998.

\bibitem[Gro62a]{Gro62a}
A.~Grothendieck.
\newblock Technique de descente et th\'{e}or\`emes d'existence en
  g\'{e}om\'{e}trie alg\'{e}brique. {V}. {L}es sch\'{e}mas de {P}icard:
  th\'eor\`emes d'existence.
\newblock In {\em S\'{e}minaire {B}ourbaki, {V}ol. 7}, pages Exp. No. 232,
  143--161. Soc. Math. France, Paris, 1962.

\bibitem[Gro62b]{Gro62b}
A.~Grothendieck.
\newblock Technique de descente et th\'{e}or\`emes d'existence en
  g\'{e}om\'{e}trie alg\'{e}brique. {VI}. {L}es sch\'{e}mas de {P}icard:
  propri\'{e}t\'{e}s g\'{e}n\'{e}rales.
\newblock In {\em S\'{e}minaire {B}ourbaki, {V}ol. 7}, pages Exp. No. 236,
  221--243. Soc. Math. France, Paris, 1962.

\bibitem[EGA-IV]{EGA-IV}
A.~Grothendieck.
\newblock \'{E}l\'ements de g\'eom\'etrie alg\'ebrique : {IV}. \'{E}tude locale
  des sch\'emas et des morphismes de sch\'emas, {T}roisi\`eme partie.
\newblock {\em Publications Math\'ematiques de l'IH\'ES}, 28:5--255, 1966.

\bibitem[Gro66]{Gro66}
A.~Grothendieck.
\newblock Groupe de {B}rauer. {III}. {E}xemples et compl{\'e}ments.
\newblock In {\em Dix Exposes Sur La Cohomologie Des Schemas}, pages 88--188.
  North-Holland Publishing Company, 1966.

\bibitem[GS90a]{GS90a}
H.~Gillet and C.~Soul\'{e}.
\newblock Arithmetic intersection theory.
\newblock {\em Inst. Hautes \'{E}tudes Sci. Publ. Math.}, (72):93--174 (1991),
  1990.

\bibitem[GS90b]{GS90b}
H.~Gillet and C.~Soul\'{e}.
\newblock Characteristic classes for algebraic vector bundles with {H}ermitian
  metric. {I}.
\newblock {\em Ann. of Math. (2)}, 131(1):163--203, 1990.

\bibitem[GS94]{GS94}
H.~Gillet and C.~Soul\'{e}.
\newblock Arithmetic analogs of the standard conjectures.
\newblock In {\em Motives ({S}eattle, {WA}, 1991)}, volume~55 of {\em Proc.
  Sympos. Pure Math.}, pages 129--140. Amer. Math. Soc., Providence, RI, 1994.

\bibitem[HPW05]{HPW05}
M.~Hindry, A.~Pacheco, and R.~Wazir.
\newblock Fibrations et conjecture de {T}ate.
\newblock {\em J. Number Theory}, 112(2):345--368, 2005.

\bibitem[Hri85]{Hri85}
P.~Hriljac.
\newblock Heights and {A}rakelov's intersection theory.
\newblock {\em Amer. J. Math.}, 107(1):23--38, 1985.

\bibitem[Kue95]{Kue95}
K.~K\"{u}nnemann.
\newblock Some remarks on the arithmetic {H}odge index conjecture.
\newblock {\em Compositio Math.}, 99(2):109--128, 1995.

\bibitem[Kah09]{Kah09}
B.~Kahn.
\newblock D\'{e}monstration g\'{e}om\'{e}trique du th\'{e}or\`eme de
  {L}ang-{N}\'{e}ron et formules de {S}hioda-{T}ate.
\newblock In {\em Motives and algebraic cycles}, volume~56 of {\em Fields Inst.
  Commun.}, pages 149--155. Amer. Math. Soc., Providence, RI, 2009.

\bibitem[Kle05]{Kle05}
S.~L. Kleiman.
\newblock The {P}icard scheme.
\newblock In {\em Fundamental algebraic geometry}, volume 123 of {\em Math.
  Surveys Monogr.}, pages 235--321. Amer. Math. Soc., Providence, RI, 2005.

\bibitem[KM00]{KM00}
K.~K\"{u}nnemann and V.~Maillot.
\newblock Th\'{e}or\`emes de {L}efschetz et de {H}odge arithm\'{e}tiques pour
  les vari\'{e}t\'{e}s admettant une d\'{e}composition cellulaire.
\newblock In {\em Regulators in analysis, geometry and number theory}, volume
  171 of {\em Progr. Math.}, pages 197--205. Birkh\"{a}user Boston, Boston, MA,
  2000.

\bibitem[Liu02]{Liu02}
Q.~Liu.
\newblock {\em Algebraic geometry and arithmetic curves}, volume~6 of {\em
  Oxford Graduate Texts in Mathematics}.
\newblock Oxford University Press, Oxford, 2002.
\newblock Translated from the French by Reinie Ern\'{e}, Oxford Science
  Publications.

\bibitem[LN59]{LN59}
S.~Lang and A.~N\'{e}ron.
\newblock Rational points of abelian varieties over function fields.
\newblock {\em Amer. J. Math.}, 81:95--118, 1959.

\bibitem[Mai00]{Mai00}
V.~Maillot.
\newblock G\'eom\'etrie d'{A}rakelov des vari\'et\'es toriques et fibr\'es en
  droites int\'egrables.
\newblock {\em M\'em. Soc. Math. Fr. (N.S.)}, (80):vi+129, 2000.

\bibitem[Mor96]{Mor96}
A.~Moriwaki.
\newblock Hodge index theorem for arithmetic cycles of codimension one.
\newblock {\em Math. Res. Lett.}, 3(2):173--183, 1996.

\bibitem[Mor12]{Mor12}
A.~Moriwaki.
\newblock {Zariski decompositions on arithmetic surfaces}.
\newblock {\em {Publ. Res. Inst. Math. Sci.}}, 48(4):799--898, 2012.

\bibitem[Mor13]{Mor13}
A.~Moriwaki.
\newblock Toward {D}irichlet's unit theorem on arithmetic varieties.
\newblock {\em Kyoto J. Math.}, 53(1):197--259, 2013.

\bibitem[Mor14]{Mor14}
A.~Moriwaki.
\newblock {\em Arakelov geometry}.
\newblock American Mathematical Society, Providence, Rhode Island, 2014.

\bibitem[Ogu09]{Ogu09}
K.~Oguiso.
\newblock Shioda-{T}ate formula for an abelian fibered variety and
  applications.
\newblock {\em J. Korean Math. Soc.}, 46(2):237--248, 2009.

\bibitem[Oor66]{Oor66}
F.~Oort.
\newblock Algebraic group schemes in characteristic zero are reduced.
\newblock {\em Invent. Math.}, 2:79--80, 1966.

\bibitem[Rud91]{Rud91}
W.~Rudin.
\newblock {\em Functional analysis}.
\newblock International Series in Pure and Applied Mathematics. McGraw-Hill,
  Inc., New York, second edition, 1991.

\bibitem[SABK94]{SABK94}
C.~Soul{\'e}, D.~Abramovich, J.F. Burnol, and J.~Kramer.
\newblock {\em Lectures on Arakelov geometry}.
\newblock Cambridge University Press, Cambridge New York, 1994.

\bibitem[SGA]{SGA}
{\em Th\'{e}orie des intersections et th\'{e}or\`eme de {R}iemann-{R}och}.
\newblock Lecture Notes in Mathematics, Vol. 225. Springer-Verlag, Berlin-New
  York, 1971.
\newblock S\'{e}minaire de G\'{e}om\'{e}trie Alg\'{e}brique du Bois-Marie
  1966--1967 (SGA 6), Dirig\'{e} par P. Berthelot, A. Grothendieck et L.
  Illusie. Avec la collaboration de D. Ferrand, J. P. Jouanolou, O. Jussila, S.
  Kleiman, M. Raynaud et J. P. Serre.

\bibitem[Shi72]{Shi72}
T.~Shioda.
\newblock On elliptic modular surfaces.
\newblock {\em J. Math. Soc. Japan}, 24:20--59, 1972.

\bibitem[Sta-Pr]{Sta-Pr}
The {Stacks Project Authors}.
\newblock \textit{Stacks Project}.
\newblock \url{https://stacks.math.columbia.edu}, 2018.

\bibitem[Tak98]{Tak98}
Y.~Takeda.
\newblock A relation between standard conjectures and their arithmetic
  analogues.
\newblock {\em Kodai Math. J.}, 21(3):249--258, 1998.

\bibitem[Tat95]{Tat95}
J.~Tate.
\newblock On the conjectures of {B}irch and {S}winnerton-{D}yer and a geometric
  analog.
\newblock In {\em S\'{e}minaire {B}ourbaki, {V}ol. 9}, pages Exp. No. 306,
  415--440. Soc. Math. France, Paris, 1995.

\bibitem[YZ17]{YZ17}
X.~Yuan and S.-W. Zhang.
\newblock The arithmetic {H}odge index theorem for adelic line bundles.
\newblock {\em Math. Ann.}, 367(3-4):1123--1171, 2017.

\bibitem[Zha95]{Zha95}
S.-W. Zhang.
\newblock Positive line bundles on arithmetic varieties.
\newblock {\em J. Amer. Math. Soc.}, 8(1):187--221, 1995.

\end{thebibliography}

\Addresses

\end{document}